\declaretheoremstyle[bodyfont=\normalfont]{noncursive}
\declaretheorem{theorem}
\declaretheorem[numberwithin=section]{lemma}
\declaretheorem[numberlike=lemma]{conjecture}
\declaretheorem[numberlike=lemma]{proposition}
\declaretheorem[style=noncursive, numberlike=lemma]{definition}
\declaretheorem[style=noncursive, numberlike=lemma]{example}
\declaretheorem[style=noncursive, numberlike=lemma]{remark}
\declaretheorem[style=noncursive, numberlike=lemma]{claim}
\newcommand{\im}{\ensuremath{\mbox{\rm Im}\, }}
\newcommand{\CC}[1]{\mathbb{C}^{#1}}
\newcommand{\C}{\mathbb{C}}
\numberwithin{equation}{section}
\def\<{\langle}
\def\>{\rangle}
\numberwithin{equation}{section}
\def\<{\langle}
\def\>{\rangle}
\def\-{\overline}
\def\-{\overline}
\def\-{\overline}
\begin{document}

\title[Regularity of CR mappings of codimension one]{ Regularity of CR-mappings of codimension one
\\ into Levi-degenerate hypersurfaces }

\author{Ilya Kossovskiy}
\address{Department of Mathematics, University of Vienna // Department of Mathematics, Masaryk University in Brno}
\email{ilya.kossovskiy@univie.ac.at, kossovskiyi@math.muni.cz}

\author{Bernhard Lamel}
\address{Department of Mathematics,  University of Vienna,  Vienna,  Austria}
\email{bernhard.lamel@univie.ac.at}

\author{Ming Xiao}
\address{Department of
Mathematics,  University of California San Diego, La Jolla, USA}
\email{m3xiao@ucsd.edu}

\begin{abstract}
We provide regularity results for CR-maps from a real hypersurface in $n$-dimensional complex space to a {\em Levi-degenerate target} hypersurface in a $n+1$-dimensional space.  We address both the real-analytic and the  smooth case. Our results allow immediate applications to the study of proper holomorphic maps between Bounded Symmetric Domains.

{\bf 2010 Mathematics Subject Classification:}  Primary 32H40 Secondary 32H35 32V10
\end{abstract}

\maketitle

\section{Introduction}

This paper is devoted to establishing
smooth and real-analytic versions of the {\em Schwarz reflection principle} for holomorphic
maps in several complex variables. In the real-analytic version of the reflection principle,  we investigate conditions under which a {\em CR-map} between real submanifolds in complex space (or a holomorphic map between {\em wedges} attached to  real submanifolds) extends holomorphically to an open neighborhood of the source manifold. In the smooth version,  we ask for conditions under which a CR-map between real submanifolds in complex space has  higher regularity than the given one. Problems of this type have
attracted considerable attention since the work of Fefferman \cite{Fe},  Lewy \cite{Le},  and Pinchuk \cite{Pi}.
In the {\em equidimensional} case,  the reflection principle is understood quite well due to the extensive research in this direction.  We refer the reader to e.g. \cite{BER, Fr1, KL,analytic} for detailed surveys and references
related to this research,  as well as for the most up-to-date results.

In this paper,  we study aspects of the
 regularity problem  for CR-mappings  between CR-manifolds $M$ and $M'$ of {\em different} dimension. This has been an extensively developing direction since the pioneering work of  Webster \cite{W}, Faran \cite{faran}, and Forstneri\v{c} \cite{Fr1}. We shall note that the case of different dimensions is far more difficult than the equi-dimensional one, and much less is known in this setting. For an overview of existing results in the real-analytic case, we refer to the recent work of Berhanu and the first author \cite{BX1}. 
 
 The regularity problem in the {\em smooth
category} rather than in the real-analytic one (in what follows,  by ``smooth'' we refer to the $C^\infty$ smoothness,  if not otherwise stated) seems to be even more difficult due to lack of techniques. Starting from the work  of Forstneri\v{c} \cite{Fr1} and Huang \cite{Hu1},  \cite{Hu2},  the expected type of regularity of a finitely smooth CR-map between smooth CR-manifolds is {\em its $C^\infty$ smoothness at a generic point.} One of the main tools for obtaining  results in this line was introduced in the work \cite{L1, L2, L3} by the second author,  which is the notion of {\em $k$-nondegeneracy} of a CR-mapping. The latter is used for studying differential systems associated with CR-mappings.  In particular,  this tool was applied by Berhanu and the third author for studying the situation when the target manifold is {\em Levi-nondegenerate}. In the work \cite{BX1},  a smooth version of the reflection principle is established for CR-mappings from an abstract CR-manifold to a strongly pseudoconvex hypersurface.
In particular,  it solves a conjecture formulated earlier  by  Huang \cite{Hu2} and also 
reproves a conjecture of  Forstneri\v{c} \cite{Fr1}  consequently.
In \cite{BX2},  this type of result is extended for CR-mapping into Levi-nondegenerate CR-submanifolds of  hypersurface type with certain conditions on the signature.  These results in particular show  that if $F: M \rightarrow M'$ is a CR-transversal CR-mapping of class $C^2$ from a real-analytic (resp. smooth) strictly pseudoconvex hypersurface $M \subset \mathbb{C}^n$ into a real-analytic (resp. smooth) Levi-nondegenerate hypersurface $M' \subset \mathbb{C}^{n+1}$,   then $F$ is real-analytic (resp. smooth) on a dense open subset of $M$ (we mention that when $F$ is assumed to be $C^{\infty}$,  the result in the real-analytic case was proved in \cite{EL}). 

However,  the case when the target is {\em Levi-degenerate} remains widely open,  in both smooth and real-analytic categories, and very little is known in this setting. In the real-analytic case, a number of very interesting results in the latter direction were obtained  in the recent paper of Mir \cite{M1}.

\medskip

The main goal of this  paper is to extend the reflection principle for CR-maps of real hypersurfaces in complex space to the setting when the target hypersurface $M' \subset\CC{n+1}$ is {\em Levi-degenerate},  while the source $M\subset\CC{n}$ is strictly pseudoconvex.

First,  we  obtain in the paper the generic analyticity property (resp. the generic smoothness property)  for finitely smooth CR-maps between real-analytic (resp. smooth) real hypersurfaces of different dimensions  with minimal assumptions for the target. Namely,  in the real-analytic case,  we assume the target $M'$ to be merely {\em holomorphically nondegenerate}. Clearly,  for any given source,  the latter assumption can not be relaxed further (see Example 1.1 below). In the smooth case,  we assume the {\em finite nondegeneracy} of the target. For definitions of different notions of nondegeneracy,  see Section 2.

Second,  we establish in the paper the {\em everywhere analyticity} (resp. {\em everywhere smoothness}) of CR-maps  in the case when the target belongs to the class of {\em uniformly $2$-nondegenerate hypersurfaces}. The latter class of hypersurfaces is of fundamental importance in Complex Analysis and Geometry. Uniformly $2$-nondegenerate hypersurfaces   have been recently studied intensively (e.g. Ebenfelt \cite{E1, E2},  Kaup and Zaitsev \cite{KaZa},  Fels and Kaup \cite{FK1, FK2},  Isaev and Zaitsev \cite{IZ},  Medori and Spiro \cite{MS},  Kim and Zaitsev \cite{KiZa},  Beloshapka and the first author \cite{BK}). These hypersurfaces naturally occur as boundaries of Bounded Symmetric Domains (see,  e.g.,  \cite{KaZa},  \cite{XY}  for details), and in this way CR-maps into uniformly $2$-nondegenerate hypersurfaces become important for understanding proper holomorphic maps between the respective Bounded Symmetric Domains (on the latter subject, see e.g. the work of Mok \cite{mok1,mok2}  and references therein).  Uniformly $2$-nondegenerate hypersurfaces  occur as well  as homogeneous holomorphically nondegenerate CR-manifolds \cite{FK1,FK2}. We  also note that the study of CR-embeddings of strictly-pseudoconvex hypersurfaces into $2$-nondegenerate  hypersurfaces performed in the present paper is important for understanding the geometry of the latter class of CR-manifolds (see,  e.g.,  \cite{BK}).

\medskip

We shall now formulate our main results. Let us recall that a map $F \colon M \to M'$ 
between real hypersurfaces is said to be {\em CR-transversal} at $p\in M$ 
if $T^{(1,0)}_{F(p)} M'  +  T^{(0,1)}_{F(p)} M' + dF (\C T_p M) = \C T_{F(p)} M'$. The
definition of uniform $2$-nondegeneracy is going to be given below in \autoref{def:basic}.

\begin{theorem}\label{T0}
Let $M \subset \mathbb{C}^n\,  (n \geq 2)$ be a strongly pseudoconvex real-analytic (resp. smooth)  hypersurface,  and $M' \subset \mathbb{C}^{n+1}$ a uniformly
$2-$nondegenerate real-analytic (resp. smooth)  hypersurface. Assume that $F=(F_{1}, ..., F_{n+1}): M \mapsto M'$ is a CR-transversal CR-mapping of class $C^2.$ Then $F$ is real-analytic (resp. smooth) everywhere on $M.$
\end{theorem}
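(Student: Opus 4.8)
The plan is to show that the hypotheses force $F$ to be, at \emph{every} point of $M$, $2$-nondegenerate as a map into $M'$, so that the reflection/differential-system machinery for finitely nondegenerate maps applies uniformly; the \emph{uniform} $2$-nondegeneracy of $M'$ (as opposed to $2$-nondegeneracy at a single point) is exactly what makes this possible, since one has no a priori control on where $F$ sends a given point. First I would localize at an arbitrary $p_0\in M$, pick normalized coordinates in which $M$ is defined by a strictly plurisubharmonic $\rho$ and $M'$ by a real-analytic (resp.\ smooth) $\rho'$ put in a convenient normal form making the Levi kernel distribution $K'$ and the second-order reflection operator of $M'$ explicit, and record the identity $\rho'\circ F=a\,\rho$ with $a$ of class $C^{1}$ and $a(p_0)\neq 0$ by CR-transversality. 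Since each component of $F$ is a CR function on the strictly pseudoconvex $M$, the classical one-sided (Lewy) extension \cite{Le} provides a holomorphic map $\widetilde F$ on the pseudoconvex side $\Omega^{+}$ of $M$, of class $C^{2}$ up to $M$, with boundary values $F$.

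Second comes the geometric core. Applying $\partial\overline{\partial}$ to $\rho'\circ F=a\,\rho$ and restricting to $T^{1,0}M$ shows that the pullback under $F$ of the Levi form of $M'$, restricted to $F_{*}T^{1,0}M$, equals $a$ times the Levi form of $M$. As $M$ is strictly pseudoconvex this form is positive definite, hence nondegenerate; therefore $dF$ is injective on $T^{1,0}M$ (so $F$ is a CR immersion), $F_{*}T^{1,0}_{p}M$ has dimension $n-1$, and the Levi form of $M'$ has rank exactly $n-1$ along $F(M)$, so its Levi kernel $K'_{F(p)}$ is one-dimensional and is complementary to $F_{*}T^{1,0}_{p}M$ in $T^{1,0}_{F(p)}M'$, for every $p$ near $p_0$. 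Uniform $2$-nondegeneracy guarantees that $M'$ is $2$-nondegenerate at $F(p_0)$ whatever $F(p_0)$ is; combining this with the CR-immersivity of $F$ and the transversality of $F_{*}T^{1,0}M$ to $K'$, I would verify that the iterated Segre/reflection identities for $F$ — the first-order one governed by the strictly pseudoconvex, hence Levi-nondegenerate, complex tangential directions, and the single remaining kernel direction governed by the second-order one — have maximal rank, i.e.\ $F$ is $2$-nondegenerate relative to $M'$ at $p_0$. Equivalently, there is a real-analytic (resp.\ smooth) map $\Psi$, built from the defining data of $M'$, with $\overline{F(z)}=\Psi\bigl(z,F(z),DF(z),D^{2}F(z)\bigr)$ for $z\in M$ near $p_0$.

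Finally, bootstrapping to regularity. In the real-analytic case $\Psi$ is real-analytic, and together with the one-sided holomorphic extension $\widetilde F$ the reflection identity allows one to continue $\widetilde F$ holomorphically to the opposite side of $M$; the edge-of-the-wedge theorem then glues the two extensions to a full neighborhood of $p_0$, so $F$ is real-analytic at $p_0$. In the smooth case, the $2$-nondegeneracy of $F$ yields, as in \cite{L1, L2, L3}, the associated determined differential system satisfied by the $C^{2}$ map $F$, whose solutions with this a priori regularity are $C^{\infty}$ by the regularity theory of \cite{BX1}; hence $F$ is smooth at $p_0$. As $p_0$ was arbitrary, $F$ is real-analytic (resp.\ smooth) everywhere on $M$. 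The generic-regularity result established earlier in the paper can be invoked along the way to dispose of any thin set of points where CR-transversality or immersivity might a priori fail and to supply the continuous one-sided extension where needed.

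The step I expect to be the main obstacle is the geometric core: showing that the $2$-nondegeneracy of the \emph{target} transfers to $2$-nondegeneracy of the \emph{map} at \emph{every} point — that the rank conditions in the second-order reflection identity never drop, even at configurations where $F$ looks a priori degenerate — and carrying this out with only $C^{2}$ control on $F$. In the smooth category this is compounded by the need to run the differential-system regularity argument of \cite{L1, L2, L3, BX1} starting from merely $C^{2}$ data, which is precisely where the finitely-smooth reflection identity for $2$-nondegenerate targets must be pushed to its limit.
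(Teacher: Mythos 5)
Your overall strategy coincides with the paper's: normalize at an arbitrary $p_0$, show that the map $F$ is $2$-nondegenerate at $p_0$ in the sense of \cite{L1, L2}, and then invoke the regularity machinery for finitely nondegenerate maps. Your first-order analysis (the Levi form of $M'$ pulled back along $F$ has rank $n-1$ on $F_*T^{1,0}M$, the one-dimensional Levi kernel of $M'$ is complementary to the image of $dF$) also matches the paper's normalization in Section 3.

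The genuine gap is exactly at the step you yourself flag as the main obstacle, and the mechanism you propose for closing it does not suffice. Knowing that $M'$ is $2$-nondegenerate \emph{at the single point} $F(p_0)$, together with CR-immersivity of $F$ and transversality of $F_*T^{1,0}M$ to the Levi kernel, does \emph{not} imply that $F$ is $2$-nondegenerate at $p_0$. In the normalized coordinates the second-order reflection vectors available to the map are $L_jL_k\rho_W(F,\overline{F})(0)$ with $1\le j,k\le n-1$, and their decisive $n$-th components only detect the coefficients $\phi_{\overline{jk}n}(0)$ with \emph{both} barred indices at most $n-1$, because $\partial F_n/\partial z_j(0)=\partial F_{n+1}/\partial z_j(0)=0$. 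Pointwise $2$-nondegeneracy of $M'$ only guarantees that \emph{some} $\phi_{\overline{i_1i_2}n}(0)\neq 0$ with $i_1\le i_2\le n$; if the only nonvanishing witnesses involve the kernel index $n$, the map fails to be $2$-nondegenerate --- precisely the scenarios the authors must handle by entirely different arguments as Cases II(b) and II(c) in the proof of Theorem 2. The missing idea is the use of the \emph{uniform} Levi degeneracy of $M'$ on a whole neighborhood of $F(p_0)$: the determinant $\varphi=\det\left(\rho_W,\Lambda_1\rho_W,\dots,\Lambda_n\rho_W\right)$ equals $\pm\tfrac{i}{2}\phi_{n\overline{n}}+O(|W|^2)$ and vanishes identically on $M'$ near $0$, so applying the CR fields $\Lambda_j$ to this identity and evaluating at $0$ yields $\phi_{\overline{jn}n}(0)=0$ for all $1\le j\le n$; only then does $2$-nondegeneracy of $M'$ force a nonzero $\phi_{\overline{j_0k_0}n}(0)$ with $j_0,k_0\le n-1$, which is what makes the map $2$-nondegenerate. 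Note also that your stated reason for needing uniformity (no control over where $F$ sends a point) is not the operative one: what is actually used is degeneracy on an open neighborhood of the image point, so that an identity can be differentiated there.
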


We note that \autoref{T0} has direct applications to the study of rigidity of  proper holomorphic maps between bounded symmetric domains (see the work \cite{XY} of Yuan and the third author, where certain rigidity results for holomorphic proper maps from the complex unit ball 
to the Type IV bounded symmetric domain $D^{IV}_m$ are obtained).   We also note that \autoref{T0} somehow parallels a theorem proved by Mir \cite{M1} and establishing the  analyticity of CR-maps (at a generic point) in the situation when the source $M$ is real-analytic and minimal while the target is the well known uniformly $2$-nondegenerate hypersurface called {\em the tube over the future light cone}:

\begin{equation}\label{cone}
\mathbb T_{n+1}=\left\{(z_1, ...., z_{n+1})\in\CC{n+1}:\, \, (\im z_1)^2+\cdots+(\im z_n)^2=(\im z_{n+1})^2\right\}.
\end{equation}

Next,  in the more general setting of $M'$,  we prove

\begin{theorem}\label{T1}
Let $M \subset \mathbb{C}^n\,  (n \geq 2)$ be a strongly pseudoconvex smooth             hypersurface,  and
$M' \subset \mathbb{C}^{n+1}$ an everywhere finitely nondegenerate  smooth hypersurface.  Let  $F=(F_{1}, ..., F_{n+1}): M \mapsto M'$ be a CR-transversal CR-mapping of class $C^2.$ Then $F$ is smooth on a dense open subset of $M.$
\end{theorem}

If the target hypersurface is real-analytic, a stronger assertion holds. 
\begin{theorem}\label{T1a}
Let $M \subset \mathbb{C}^n\,  (n \geq 2)$ be a strongly pseudoconvex smooth             hypersurface,  and
$M' \subset \mathbb{C}^{n+1}$ a holomorphically 
nondegenerate real-analytic hypersurface.  
Let  $F=(F_{1}, ..., F_{n+1}): M \mapsto M'$ 
be a CR-transversal CR-mapping of class $C^2.$ Then $F$ is smooth on a dense open subset of $M.$
\end{theorem}

A similar result holds in the real-analytic category:
here we prove the stronger result, which 
does (only) require $M'$ to be holomorphically nondegenerate.

\begin{theorem} \label{T2}
Let $M \subset \mathbb{C}^n\,  (n \geq 2)$ be a strongly pseudoconvex  real-analytic hypersurface,  and
$M' \subset \mathbb{C}^{n+1}$ a holomorphically nondegenerate real-analytic  hypersurface. Assume that $F=(F_{1}, ..., F_{n+1}): M \mapsto M'$ is a CR-transversal CR-mapping of class $C^2.$ Then $F$ is real-analytic on
a dense open subset of $M.$
\end{theorem}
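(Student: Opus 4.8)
The plan is to derive Theorem~\ref{T2} from Theorem~\ref{T1} by a reduction argument that exploits the interplay between holomorphic nondegeneracy and finite nondegeneracy. Recall that a real-analytic hypersurface $M'\subset\CC{n+1}$ is holomorphically nondegenerate at a point precisely when it is $k$-nondegenerate at some nearby point, for some finite $k$; more precisely, the set of points where $M'$ fails to be finitely nondegenerate is a proper real-analytic subvariety $E'\subset M'$ (here holomorphic nondegeneracy of $M'$ is being assumed, say, as a connected hypersurface, or we pass to the connected component of interest). Thus $M'\setminus E'$ is an open dense subset that is everywhere finitely nondegenerate, but a priori the degeneracy order $k$ is not locally bounded near $E'$, which is exactly why one cannot directly invoke Theorem~\ref{T1} on all of $M'$.

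\medskip

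First I would set up the dichotomy on the source. Since $M$ is strongly pseudoconvex and real-analytic and $F$ is a CR-transversal $C^2$ CR-map, one knows (by CR-transversality together with strong pseudoconvexity, as in the arguments underlying Theorems~\ref{T0} and~\ref{T1}) that $F$ does not send open pieces of $M$ into $E'$: indeed, if $F$ mapped an open subset $U\subset M$ into the real-analytic subvariety $E'$, one could stratify $E'$ and, on a piece of $U$, reduce to a CR-map into a lower-dimensional real-analytic CR-submanifold, contradicting CR-transversality into $M'$ (a transversal map cannot have image contained in a proper real-analytic subset of the target hypersurface on an open set; this uses that the source is minimal, being strongly pseudoconvex). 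Consequently the set $U_0:=\{p\in M:\ F(p)\notin E'\}$ is a dense open subset of $M$.

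\medskip

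Next, on $U_0$ the target is, near every point of the image, finitely nondegenerate, but possibly of unbounded order. I would therefore stratify: for each integer $k\ge 1$, let $\Omega_k\subset M'\setminus E'$ be the open set of points where $M'$ is exactly $k$-nondegenerate (or at most $k$-nondegenerate), and let $W_k:=F^{-1}(\Omega_k)\cap U_0$. The sets $W_k$ are relatively open in $U_0$ and their union is $U_0$ (since $F$ is continuous and every point of $M'\setminus E'$ is $k$-nondegenerate for some $k$), so by the Baire category theorem $\bigcup_k \overline{W_k}$ has nonempty interior in every open subset of $U_0$; hence $\bigcup_k W_k$ contains a dense open subset $U_1$ of $M$. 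On each connected component of each $W_k$, the restriction $F|_{W_k}:W_k\to M'$ is a CR-transversal $C^2$ CR-map from a strongly pseudoconvex real-analytic hypersurface into a hypersurface that is everywhere (at most) $k$-nondegenerate near the relevant part of the image, so Theorem~\ref{T1} applies and yields that $F$ is real-analytic on a dense open subset of $W_k$. Taking the union over all $k$ and using again that a countable union of dense open subsets of the pieces exhausts a dense open subset, I conclude that $F$ is real-analytic on a dense open subset of $M$.

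\medskip

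The main obstacle, and the step deserving the most care, is the first one: ruling out that $F$ maps open pieces of $M$ into the degeneracy locus $E'$ (and, more generally, controlling the behavior of $F$ near points of $\partial U_0$). This requires a genuine argument combining CR-transversality, the strong pseudoconvexity (hence minimality) of the source, and the structure of $E'$ as a real-analytic subvariety of $M'$ — essentially one needs a rank/transversality estimate showing that a CR-transversal map into $M'$ cannot degenerate into a lower-dimensional real-analytic subset on an open set; once this is in place, the rest is a soft stratification-plus-Baire argument layered on top of Theorem~\ref{T1}. One should also double-check that "finitely nondegenerate of order $\le k$" is an open condition and behaves well under the stratification, so that Theorem~\ref{T1} is genuinely applicable on each $W_k$.
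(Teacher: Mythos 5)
Your overall strategy --- excise from $M'$ the locus where finite nondegeneracy fails, show that $F^{-1}$ of its complement is dense in $M$, and invoke \autoref{T1} there --- is the same as the paper's. (Incidentally, the Baire stratification over the order $k$ is unnecessary: the hypothesis of \autoref{T1} is finite nondegeneracy at every point with order allowed to depend on the point, and its proof is local, so it applies directly to the open target $M'\setminus E'$.) The genuine gap is in the step you yourself flag as the main obstacle: ruling out that $F$ maps an open piece of $M$ into the degeneracy locus $E'$. The mechanism you propose --- stratify the \emph{real-analytic} set $E'$ and argue that a CR-transversal map from a minimal source cannot send an open set into a lower-dimensional real-analytic CR-submanifold of $M'$ --- is not a valid principle. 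CR-transversality only constrains the component of $dF$ transverse to the complex tangent space of $M'$ and is perfectly compatible with the image lying in a proper real-analytic subset: the linear embedding $(z,w)\mapsto(z,w,0)$ of $S^3$ into $S^5$ is CR-transversal, yet its image is the $3$-dimensional real-analytic CR-submanifold $S^5\cap\{z_3=0\}$. So transversality plus strong pseudoconvexity of the source, by themselves, give no contradiction, and your first step does not go through as described.

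The paper closes exactly this gap by showing that $E'$ is not an arbitrary real-analytic subvariety but the trace on $M'$ of a \emph{complex-analytic} variety $X$ (\autoref{thm:lnondegvariety}). This is the actual content of Section 6 and rests on two facts: the ideal generated by the determinants cutting out the degeneracy loci is $\bar\partial_b$-closed (\autoref{lem:nondegvariety2}), and any subset of a generic real-analytic submanifold defined by a $\bar\partial_b$-closed ideal is the intersection of that submanifold with a complex subvariety of the ambient space (\autoref{pro:stableideals}, proved via normal coordinates and a Taylor expansion in the antiholomorphic variables). Only once $E'=X\cap M'$ is known to be complex-analytic does the transversality of $F$ (together with the normalization of Section 3) enter to show that $F^{-1}(M'\setminus X)$ is dense. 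Without this complex-analyticity --- or some substitute for it --- the reduction to \autoref{T1}, which is otherwise sound, cannot get started.
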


\bigskip

As was  mentioned above,  for any given $M$,  one cannot drop the holomorphic nondegeneracy assumption when expecting the generic analyticity of CR-embeddings $F:\, M\mapsto M', \, M'\in\CC{n+1}$ (see Example 1.1 below).
The transversality assumption on $F$ cannot be dropped either. See [BX2] for an example
where $F$ (being not transversal) is not smooth on any open subset of $M$. Thus,  the assertion of \autoref{T2} is in a sense optimal.

\begin{example}\label{E1}
Let $M \subset \mathbb{C}^n, \,  n \geq 2$ be a strongly pseudoconvex  hypersurface.
Consider the holomorphically degenerate hypersurface $M' =M\times\CC{}\subset\CC{n+1}$.
Let  $f$
be a $C^{2}$ CR function on $M$ which is not smooth on any open subset of $M$.
Then $F(Z):=(Z,  f(Z)), \, Z\in M$ is a CR-transversal map of class $C^2$ from $M$ to $M'.$ Clearly,  $F$ is not smooth on any open subset of $M.$
\end{example}

The following example shows also that one cannot expect
$F$ to be real-analytic everywhere on $M$ in the setting of Theorems 3.

\begin{example}\label{E2}
Let $M \subset \mathbb{C}^2$ be the strongly pseudoconvex real hypersurface defined by
$$|z|^2+|w|^2+|1-w|^{10}=1$$
near $(0, 1)$,   where $(z, w)$ are the coordinates in $\mathbb{C}^2.$ Let $M' \subset \mathbb{C}^3$ be the holomorphically nondegenerate real hypersurface  defined by
$$|z_1|^2+|z_2|^2+|z_3|^4=1,$$ 
where $(z_1, z_2,  z_3)$ are the coordinates in $\mathbb{C}^3.$ Consider the map
$$F=(z,  w,  (1-w)^{5/2})$$
from one side of $M: \{ |z|^2+|w|^2+|1-w|^{10} < 1 \}$ to $\mathbb{C}^3.$ It is easy to see $F$ extends $C^2-$smoothly up to
$M$,   sending $M$ to $M'.$ However,  $F$ is not even $C^3$ at the point $(0, 1).$
\end{example}

\bigskip

We,  however,  hope that the following is true.
\begin{conjecture}
For any integer $n\geq 2$, there exists an integer $k = k(n)$ such that the following holds. Let $M \subset \mathbb{C}^n\,  (n \geq 2), \, M' \subset \mathbb{C}^{n+1}$ 
be  real-analytic (resp. smooth) hypersurfaces that are finitely nondegenerate (on some dense open subsets), and $F=(F_{1}, ..., F_{n+1}): M \to M'$ be a 
CR-transversal CR-mapping of class $C^{k}$. Then $F$ is 
real-analytic (resp. smooth)  on
a dense open subset of $M.$
\end{conjecture}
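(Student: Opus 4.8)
\emph{A proof strategy toward the Conjecture.} The plan is to promote the architecture behind \autoref{T0}--\autoref{T2} to a quantitative statement, the two new features being a non-pseudoconvex, merely generically finitely nondegenerate source and an explicit bound on the required finite order $k=k(n)$. First I would localize. The set of points of $M$ at which $M$ fails to be $\ell$-nondegenerate for all $\ell\le\ell_0(n)$, or at which $M$ fails to be of finite type, is closed and nowhere dense: finite nondegeneracy forces holomorphic nondegeneracy, and a connected real-analytic holomorphically nondegenerate hypersurface is minimal on a dense open subset (in the smooth case one restricts additionally to the dense open set where $M$ is of finite type); the same applies to $M'$. It therefore suffices to prove real-analyticity (resp.\ smoothness) of $F$ near a point $p\in M$ at which $M$ is $\ell$-nondegenerate and minimal, $F(p)=q$, and $M'$ is $\ell'$-nondegenerate at $q$, with $\ell,\ell'\le\ell_0(n)$. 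Since $\dim_{\mathbb{R}}M'-\dim_{\mathbb{R}}M=2$, CR-transversality of $F$ together with a jet-parametrization argument confines $F$, near $p$, to the locus on which $M'$ is genuinely finitely jet-determined.

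The crucial step is to show that $F$ is itself $k_0$-nondegenerate on a dense open set of such points, with $k_0=k_0(\ell,\ell',n)\le k_1(n)$. Writing $\rho'$ for a defining function of $M'$ and applying the CR vector fields of $M$ to the identity $\rho'(F,\overline F)\equiv 0$ along $M$, one obtains after at most $\ell'$ differentiations a family of reflection relations; transversality guarantees that the direction along which $M'$ degenerates is effectively hit, so that the associated Lamel-type reflection map has generically maximal rank. Counting differentiations against the available $C^{k}$ regularity is exactly what pins down $k(n)$: one needs $k\ge k_0$ plus the derivatives consumed in the bootstrap below, and all of these are bounded in terms of $\ell_0(n)$ and $n$, not in terms of the particular hypersurfaces. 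Examples \ref{E1} and \ref{E2} show that this counting is essential — without transversality, or with too few derivatives, the conclusion genuinely fails.

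Once $F$ is known to be $k_0$-nondegenerate on a dense open subset, I would invoke the determined-system machinery of \cite{L1,L2,L3}: the components of $F$ then satisfy a closed system $\partial^{\,\alpha}F=\Psi\bigl(\text{lower order derivatives of }F,\ \overline F\bigr)$ whose right-hand side is assembled from boundedly many jets of $\rho'$. In the real-analytic case, combining this with the wedge extension of CR functions over the minimal source (Tr\'epreau--Tumanov) and an edge-of-the-wedge/reflection argument yields holomorphic extension of $F$ across a dense open subset of $M$, exactly as in \cite{BX1,BX2}. In the smooth case, the same system drives a bootstrap $C^{k}\Rightarrow C^{k+1}\Rightarrow\cdots\Rightarrow C^{\infty}$ on a slightly shrunk neighborhood, each step costing a bounded number of derivatives; since all constants are uniform, a single $k(n)$ suffices. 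Gluing the local conclusions over the dense open set of good points $p$ completes the proof.

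The main obstacle is the middle step: proving that a merely CR-transversal $C^{k}$ CR map into a generically finitely nondegenerate target is itself finitely nondegenerate on a dense open set, with nondegeneracy order bounded by a function of $n$ alone. For Levi-nondegenerate targets this is comparatively direct, but when $M'$ is Levi-degenerate its degeneracy locus can meet the image $F(M)$ in an uncontrolled way, and — $F$ being only finitely smooth — there is a genuine risk of exhausting the available derivatives before the nondegeneracy of $F$ becomes visible. Making the degree bound $k(n)$ uniform across all admissible pairs $(M,M')$, rather than merely finite for each fixed pair, is the heart of the difficulty; a secondary technical point is ensuring that the determined system in the last step has coefficients depending on only boundedly many jets of the defining functions, so that the smooth bootstrap is likewise uniform.
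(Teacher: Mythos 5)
What you have written is a research programme, not a proof, and indeed it cannot be checked against the paper's argument because the paper does not prove this statement: it is stated explicitly as a \emph{conjecture} and left open by the authors (they even remark that in the real-analytic version the hypothesis on $M$ could be weakened to holomorphic nondegeneracy). Your outline faithfully reproduces the architecture of \autoref{T1} and \autoref{T2} — localize to good points, show $F$ is finitely nondegenerate, then close a determined system and invoke \cite{L1,L2} — but the step you yourself flag as ``the main obstacle,'' namely that a CR-transversal $C^k$ map into a generically finitely nondegenerate target is itself $k_0$-nondegenerate on a dense open set with $k_0$ bounded by a function of $n$ alone, is precisely the content of the conjecture. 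Asserting that ``transversality guarantees that the direction along which $M'$ degenerates is effectively hit, so that the associated Lamel-type reflection map has generically maximal rank'' is a restatement of the desired conclusion, not an argument for it.

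Concretely, two things break when you try to push the paper's method into the conjectured generality. First, every nondegeneracy computation in Sections 4--6 rests on the normalization of \autoref{rknormalization}, which uses strong pseudoconvexity of the source in an essential way: the identity $\lambda\,{\bf I}_{n-1}=AUA^{*}$ forces $A$ to have full rank and $U$ to have $n-1$ positive eigenvalues, and this is what makes $L_{i}\rho_{W}(F,\overline F)(0)$ have the triangular form used to rule out Cases II(b), I, etc.\ by contradiction ($L_{j_0}\overline F_{j_0}(0)\neq 0$ while $L_{j_0}\overline F_i(0)=0$ for $i\neq j_0$). With a merely finitely nondegenerate, non-pseudoconvex source you lose control of the signature and rank of $U$, and the elimination of the ``bad'' cases — which is the engine of the induction in Section 5 — has no substitute in your sketch. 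Second, the uniform bound $k(n)$: the paper works with $C^2$ maps only because the determined system it ultimately closes involves at most second-order derivatives of $F$; for an $\ell$-nondegenerate target with $\ell$ up to $n$, your derivative count is not carried out, and \autoref{E2} shows the answer is delicate. So the proposal identifies the right difficulties but resolves none of them; it is a plausible plan of attack on an open problem, not a proof.
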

(In the real-analytic version of the Conjecture, we may replace the condition on $M$ by its holomorphic nondegeneracy). 

\medskip

The paper is organized as follows. In Section 2,  we present some preliminaries on the degeneracy of CR-submanifolds and CR-mappings. Section 3 is devoted to a normalization result for a CR-map between hypersurfaces satisfying the assumptions of Theorems 1-3. It will be applied in later arguments. Theorems \ref{T0}-\ref{T2} will be proved in Sections 4-6.

\bigskip

{\bf Acknowledgement:} The third author thanks  Shiferaw Berhanu and Xiaojun Huang for helpful comments. All of the authors thank two anonymous 
referees for several helpful comments and suggestions. 

The first author was supported by the FWF (Austrian Science Fund) Projects P29468 and I3472 and the GACR (Czech Grant Agency) grant 17-19437S. The second author is supported by the FWF (Austrian Science Fund) Project I3472. The third author is supported in part by National Science Foundation grant DMS-1800549.

\section{Preliminaries}
In this section,  we recall various notions of degeneracy in CR geometry,  and their relations.  The following definition is introduced in \cite{BHR}.
\begin{definition}\label{def:basic}
Let $M$ be a smooth generic submanifold in $\mathbb{C}^N$ of CR-dimension $n$ and  real codimension $d$,  and $p \in M$. Let $\rho=(\rho_{1}, ..., \rho_{d})$ be the defining function of $M$ near $p$,  
and choose a basis $L_{1}, ..., L_{n}$ of CR vector fields near $p.$ For a multiindex
$\alpha=(\alpha_{1}, ..., \alpha_{n})$,   write $L^{\alpha}=L_{1}^{\alpha_1}...L_{n}^{\alpha_n}.$
Define the increasing sequence of subspaces $E_{l}(p)$ ($0 \leq l$) of $\mathbb{C}^{N}$ by

$$E_{l}(p)=\mathrm{Span}_{\mathbb{C}}\{L^{\alpha} \rho_{\mu, Z}(Z, \overline{Z})|_{Z=p}: 0 \leq |\alpha| \leq l,  1 \leq \mu \leq d\}.$$
Here $\rho_{\mu, Z}=(\frac{\partial \rho_{\mu}}{\partial z_{1}}, \cdots, \frac{\partial \rho_{\mu}}{\partial z_{N}})$,   and $Z=(z_{1}, \cdots, z_{N})$ are the coordinates in $\mathbb{C}^{N}.$ We say that $M$ is {\em $k-$nondegenerate at $p,  \, \,  k \geq 1$} if
$$E_{k-1}(p) \neq E_{k}(p) = \mathbb{C}^{N}.$$ 
We say $M$ is $k-$degenerate at $p$ if $E_{k}(p) \neq \mathbb{C}^N$, and 
we say that $M$ is uniformly $k$-nondegenerate if it is $k$-nondegenerate at 
every point $p\in M$. 
\end{definition}

We say $M$ is (everywhere) finitely nondegenerate if $M$ is $k(p)-$nondegenerate at every $p \in M$ for some integer $k(p)$ depending on $p.$ A smooth CR-manifold $M$ of hypersurface type is Levi-nondegenerate at $p \in M$ if and only if $M$ is $1-$nondegenerate at $p.$ This notion of degeneracy is then generalized to
CR-mappings by the second author [La1] as follows.

\begin{definition}
Let ${M} \subset \mathbb{C}^N,  {M}' \subset \mathbb{C}^{N'}$ be two generic CR-submanifolds of CR dimension $n,  ~n'$,  respectively.  Let $H: {M} \rightarrow {M}'$ be a CR-mapping of class $C^r$ near $p_{0} \in {M}.$ Let $\rho=(\rho_{1}, \cdots, \rho_{d'})$ be local defining functions for ${M}'$ near $H(p_{0})$,   and choose a basis $L_{1}, \cdots, L_{n}$ of CR vector fields for ${M}$ near $p_{0}.$ If $\alpha=(\alpha_{1}, \cdots, \alpha_{n})$ is a multiindex,  write $L^{\alpha}=L_{1}^{\alpha_{1}} \cdots L_{n}^{\alpha_{n}}.$ Define the increasing sequence of subspaces $E_{l}(p_{0}) (0 \leq l \leq r)$ of $\mathbb{C}^{N'}$ by

$$E_{l}(p_{0})=\mathrm{Span}_{\mathbb{C}}\{L^{\alpha} \rho_{\mu, Z'}(H(Z), \overline{H(Z)})|_{Z=p_{0}}: 0 \leq |\alpha| \leq l,  1 \leq \mu \leq d'\}.$$
Here $\rho_{\mu, Z'}=(\frac{\partial \rho_{\mu}}{\partial z'_{1}}, \cdots, \frac{\partial \rho_{\mu}}{\partial z'_{N'}})$,   and $Z'=(z'_{1}, \cdots, z'_{N'})$ are the coordinates in $\mathbb{C}^{N'}.$ We say that {\em $H$ is $k_{0}-$nondegenerate at $p_{0}\, \, (0 \leq k_{0} \leq r)$} if
$$E_{k_{0}-1}(p_{0}) \neq E_{k_{0}}(p_{0}) = \mathbb{C}^{N'}.$$

\end{definition}

A manifold $M$ is $k_0-$nondegenerate if and only if the identity map from $M$ to $M$ is $k_0-$nondegenerate. For a real-analytic submanifold,  we also introduce the notion of holomorphic degeneracy.

\begin{definition}
A real-analytic submanifold $M \subset \mathbb{C}^N$ is {\em holomorphically nondegenerate at} $p \in M$ if there is no germ at $p$ of a holomorphic vector field $X$ tangent to $M$ such that $X|_{M} \not \equiv 0.$ We shall also say that {\em $M$ is holomorphically nondegenerate} if it is so at every point of it.
\end{definition}

We recall the following proposition about $k-$nondegeneracy and holomorphic nondegeneracy.
For a proof of this,  see [BER].
\begin{proposition}
Let $M \subset \mathbb{C}^N$ be a connected real-analytic generic manifold with CR dimension $n.$ Then the following conditions are equivalent:
\begin{itemize}
\item $M$ is holomorphically nondegenerate.
\item $M$ is holomorphically nondegenerate at some point $p \in M.$
\item $M$ is $k-$nondegenerate at some point $p \in M$ for some $k \geq 1.$
\item There exists $V$,   a proper real-analytic subset of $M$ and an integer $l=l(M),  1 \leq l(M) \leq n$,   such that $M$ is $l-$nondegenerate at every $p \in M \setminus V.$
\end{itemize}
\end{proposition}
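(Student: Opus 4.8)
The plan is to show that all four conditions are equivalent to a single numerical statement: that the generic dimension of the nested union $E_\infty(p):=\bigcup_{l\ge0}E_l(p)$ — the $E_l(p)$ being the subspaces defined above, but with $L^\alpha$ now ranging over \emph{all} iterated CR derivatives — equals $N$. Put $m:=\max_{p\in M}\dim_{\mathbb C}E_\infty(p)$. Since ``$\dim E_l(\cdot)\ge r$'' is an open condition and ``$\dim E_l(\cdot)<r$'' is cut out by the vanishing of real-analytic functions (minors), the set $V_0:=\{p\in M:\dim E_\infty(p)<m\}$ is a proper real-analytic subset of the connected $M$; hence if $m<N$ then $M$ is $k$-nondegenerate at no point, whereas if $m=N$ then $M$ is $k$-nondegenerate at every point outside a proper real-analytic set, namely with $k$ the least index for which $E_k(p)=\mathbb C^N$ there. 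A standard stabilization argument for the increasing chain $E_0(p)\subseteq E_1(p)\subseteq\cdots$ at a generic (constant-dimension) point, together with $\dim E_0\equiv d<N$, yields a uniform bound $1\le l(M)\le n$ for this least index. Consequently $m=N$ is equivalent to each of (3) and (4), while (1)$\Rightarrow$(2) and (4)$\Rightarrow$(3) are immediate. It therefore remains to prove that $m=N$ holds if and only if $M$ is holomorphically nondegenerate (at some, equivalently every, point).

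\emph{Degeneracy implies $m<N$.} Suppose there is a germ at $p_0$ of a holomorphic vector field $X=\sum_{j=1}^N a_j(Z)\,\partial/\partial Z_j$ tangent to $M$ with $X|_M\not\equiv0$; tangency means $X\rho_\mu=\sum_j a_j\,\rho_{\mu,z_j}$ vanishes on $M$ for each $\mu$. The coefficients $a_j$, being holomorphic, are killed by every CR vector field, so $L^\alpha(X\rho_\mu)=\sum_j a_j\,(L^\alpha\rho_{\mu,z_j})$; evaluating at any $q\in M$ near $p_0$ — where this still vanishes, the $L^\alpha$ being tangent to $M$ — gives
\[
\sum_{j=1}^N a_j(q)\,(L^\alpha\rho_{\mu,z_j})(q,\overline q)=0\qquad\text{for all }\alpha,\ 1\le\mu\le d,
\]
i.e. $a(q)=(a_1(q),\dots,a_N(q))$ annihilates every generator of $E_\infty(q)$ under the symmetric pairing $\langle u,v\rangle=\sum_j u_jv_j$. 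Since $M$ is generic and $X|_M\not\equiv0$, the map $a$ does not vanish identically, so $a(q)\ne0$ — hence $\dim E_\infty(q)\le N-1$ — on a dense open subset near $p_0$; thus $m<N$. Equivalently, $m=N$ forces $M$ to be holomorphically nondegenerate everywhere, so $m=N\Rightarrow$(1)$\Rightarrow$(2).

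\emph{$m<N$ implies degeneracy.} Here one must manufacture a nonzero tangent holomorphic vector field, and this is the main obstacle. Pass to the complexification $\mathcal M=\{(Z,\zeta)\in\mathbb C^N\times\mathbb C^N:\rho_\mu(Z,\zeta)=0,\ 1\le\mu\le d\}$ and replace the CR derivatives by the now-independent $\zeta$-derivatives: set $\widehat E_l(Z,\zeta)=\mathrm{Span}_{\mathbb C}\{\partial_\zeta^\beta\rho_{\mu,Z}(Z,\zeta):|\beta|\le l\}$, so that, after straightening $L_k$ to $\partial/\partial\zeta_k$, one has $\widehat E_l(p,\overline p)=E_l(p)$ and $\widehat E_\infty:=\bigcup_l\widehat E_l$ is a \emph{holomorphic} family of subspaces, still of generic rank $m<N$. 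Near a generic point $(Z_0,\zeta_0)$, where this rank is locally constant, $\widehat E_\infty$ is a holomorphic subbundle whose annihilator has rank $N-m\ge1$; choose a nonvanishing holomorphic section $a(Z,\zeta)$ of it, so that $\langle a,\partial_\zeta^\beta\rho_{\mu,Z}\rangle\equiv0$ on $\mathcal M$ for every $\beta$. Differentiating these identities tangentially along $\mathcal M$ in the $\zeta$-directions and using $\partial_\zeta^{\beta+e_k}\rho_{\mu,Z}\in\widehat E_\infty$ shows that $\partial a/\partial\zeta_k$ again annihilates $\widehat E_\infty$; solving the resulting consistent linear system $\partial a/\partial\zeta_k=\Lambda_k\,a$ lets one renormalize $a$ so that it depends on $Z$ alone. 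Then $\langle a(Z),\rho_{\mu,Z}(Z,\zeta)\rangle\equiv0$ on $\mathcal M$, which is exactly $X\rho_\mu|_M\equiv0$ for $X=\sum_j a_j(Z)\,\partial/\partial Z_j$, so $X$ is tangent to $M$ and nontrivial near $p_0$; propagating $X$ along Segre varieties by the identity principle, $M$ is holomorphically degenerate at every point. Hence $m<N$ implies that (1) and (2) fail, and together with the previous two paragraphs this shows $m=N\iff$(1)$\iff$(2), so all four conditions are equivalent. The delicate point, as indicated, is precisely the removal of the $\zeta$-dependence of $a$: it is here that the complexification and the finite-level realization of $\widehat E_\infty$ (the bound $l(M)\le n$) do the real work.
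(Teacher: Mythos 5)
The paper offers no proof of this proposition to compare against --- it is recalled from \cite{BER} with the proof deferred there --- so your argument can only be judged on its own terms; in substance it is the standard argument from \cite{BER}. The reduction of all four conditions to ``the generic dimension $m$ of $E_\infty(\cdot)$ equals $N$'', the semicontinuity-plus-stabilization count that yields $1\le l(M)\le n$ starting from $\dim E_0\equiv d$, and the easy implication (the coefficient vector of a tangent holomorphic field annihilates $E_\infty$ because CR vector fields kill its holomorphic coefficients) are all correct.

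Two steps need more than you give them. First, your device for removing the $\zeta$-dependence of the annihilating section (the system $\partial a/\partial\zeta_k=\Lambda_k\,a$) does work, but only near points where $\widehat E_\infty$ has locally constant rank, and it is heavier than necessary: Taylor expansion in $\chi$ along the Segre variety --- precisely the identity \eqref{e:taylorexp} that appears in the proof of \autoref{pro:stableideals} --- shows that $\widehat E_\infty(Z,\zeta)=\mathrm{Span}_{\mathbb C}\{\rho_{\mu,Z}(Z,\zeta')\colon \rho(Z,\zeta')=0\}$, a space that is independent of $\zeta$ from the outset, so no integration is required. Second, and more seriously, the clause ``propagating $X$ along Segre varieties by the identity principle'' is doing essential, unproved work: your construction produces a tangent field only near \emph{generic} points, so as written you refute condition (1) but not condition (2), and the implication (2)$\Rightarrow$(1) for connected $M$ is itself a nontrivial theorem. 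The $\zeta$-free description just mentioned closes this gap cheaply: since the (now $Z$-only) family of generators has generic rank $m<N$, all of its $(m{+}1)\times(m{+}1)$ minors vanish identically on the connected manifold, and the cofactor construction produces, near \emph{every} point, a holomorphic $a(Z)\not\equiv 0$ with $\sum_j a_j\,\rho_{\mu,z_j}\equiv 0$ on $M$; this gives holomorphic degeneracy at every point with no propagation lemma at all.
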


\section{Normalization}
In the section,  we prove an auxiliary normalization result for CR-maps (\autoref{rknormalization} below), following the lines introduced by Huang \cite{Hu3},  in the following setting. Let $M \subset \mathbb{C}^n\,  (n \geq 2)$ be a  strongly pseudoconvex real-analytic (resp. smooth)  hypersurface defined near a point $p_0\in M$,  and $M' \subset \mathbb{C}^{n+1}$ a real-analytic (resp. smooth)  hypersurface which is Levi-degenerate at a point $q_0\in M'$. Assume that $F=(F_{1}, ..., F_{n+1}): M \mapsto M'$ is a CR-transversal CR-mapping of class $C^2$ near $p_{0}$ with $F(p_{0})=q_{0}.$
 We assume,  after a holomorphic change of coordinates in $\mathbb{C}^n$,   $p_{0}=0$ and that $M$ is defined near $0$ by
\begin{equation}
r(Z, \overline{Z})=-\mathrm{Im}z_{n}+ \sum_{i=1}^{n-1}|z_i|^2+\psi(Z, \overline{Z}), 
\end{equation}
where $Z=(z_1, ..., z_n)$ are the coordinates in $\mathbb{C}^n$,   $\psi(Z, \overline{Z})=O(|Z|^3)$ is real-analytic (resp. smooth) function defined near $0.$

After a holomorphic change of coordinates in $\mathbb{C}^{n+1}$,   we assume that $q_{0}=F(p_{0})=0$ and
that $M'$ is locally defined  near $0$ by
\begin{equation}
\rho(W, \overline{W})=-\mathrm{Im}w_{n+1}+\tilde{W}U\overline{\tilde{W}}^t + \phi(W, \overline{W}), 
\end{equation}
for some Hermitian $n \times n$ matrix $U$. Here
$W=(\tilde{W},  w_{n+1})=(w_1, ..., w_n,  w_{n+1})$ are the coordinates in $\mathbb{C}^{n+1}$,  
$\phi(W, \overline{W})=O(|W|^3)$ is a real-analytic (resp. smooth) function defined near $0.$

If we write $F=(\tilde{F}, F_{n+1})=(F_{1}, ..., F_{n}, F_{n+1})$,   then
$F$ satisfies:
\begin{equation}\label{eqn}
-\frac{F_{n+1}-\overline{F_{n+1}}}{2i}+\tilde{F}U\overline{\tilde{F}}^t+ \phi(F, \overline{F})=0, 
\end{equation}
along $M.$ Since $F$ is CR-transversal,  we get $\lambda:=\frac{\partial F_{n+1}}{\partial s}|_0 \neq 0$,   where we write $z_n=s+it$(cf. \cite{BER}).  Moreover,  (\ref{eqn}) shows that the imaginary part of $F_{n+1}$ vanishes to second order at the origin,  and so the number $\lambda$ is real. By applying the change of coordinates in $\mathbb{C}^{n+1}$: $\tau(w_1, ..., w_n,  w_{n+1})=(w_1, ..., w_n,  -w_{n+1})$ if necessary,  we may assume that $\lambda >0.$ Let us write
\begin{equation}\label{eqnlj}
L_{j}=2i\left(\frac{\partial r}{\partial \overline{z_{n}}}\frac{\partial}{\partial \overline{z_{j}}}
+\frac{\partial r}{\partial \overline{z_{j}}}\frac{\partial}{\partial \overline{z_{n}}}\right),  1 \leq j \leq n-1.
\end{equation}

Then $\{L_{j}\}_{1 \leq j \leq n-1}$ forms a basis for the CR vector fields along $M$ near $p.$ By applying $\overline{L_{j}},  \overline{L_{j}L_{k}},  1 \leq j, k \leq n-1$ to the equation (\ref{eqn}) and evaluating at $0$,   we get:

$$\frac{\partial F_{n+1}}{\partial z_{j}}(0)=0,  \frac{\partial^2 F_{n+1}}{\partial z_{j}\partial z_{k}}(0)=0,  1 \leq j,  k \leq n-1.$$

Hence we have, 
\begin{equation}\label{eqn1a}
F_{n+1}(Z)=\lambda z_{n} + O(|Z|^2).
\end{equation}
For $1 \leq j \leq n$,   we write
\begin{equation}
F_{j}=a_{j}z_{n}+ \sum_{i=1}^{n-1}a_{ij}z_{i}+O(|Z|^2), 
\end{equation}
for some $a_{j} \in \mathbb{C},  a_{ij} \in \mathbb{C},  1 \leq i \leq n-1,  1 \leq j \leq n.$ Or equivalently, 
\begin{equation}\label{eqn2}
(F_{1}, ..., F_{n})=z_{n}(a_{1}, ..., a_{n})+ (z_{1}, ..., z_{n-1})A + (\hat{F}_{1}, ..., \hat{F}_{n}), 
\end{equation}
where $A=(a_{ij})_{1 \leq i \leq n-1,  1 \leq j \leq n}$ is an $(n-1) \times n$ matrix,  and
$\hat{F}_{j}=O(|Z|^2),  1 \leq j \leq n.$
We plug in (\ref{eqn1a}) and (\ref{eqn2}) into (\ref{eqn}) to get, 
\begin{equation}\label{eqn3}
\lambda |\widetilde{Z}|^2 + O(|\widetilde{Z}||z_{n}| + |z_{n}|^2)+o(|Z|^2)
=\widetilde{z}AUA^{*}\overline{\widetilde{z}}^t + O(|\widetilde{Z}||z_{n}|+|z_{n}|^2)
+o(|Z|^2), 
\end{equation}
where we write $\widetilde{Z}=(z_{1}, ..., z_{n-1}).$ Equip $\widetilde{Z}$ with weight $1$,   and $z_{n}$ with weight $2.$ We then compare terms with weight $2$ at both sides of (\ref{eqn3}) to get:
\begin{equation}\label{eqnu}
\lambda {\bf I}_{n-1}=AUA^{*}.
\end{equation}

As a consequence,  the matrix $A$ has full rank $(n-1),$ and  $U$ has rank $(n-1)$ or $n.$ Recall
that $M'$ is not $1-$nondegenerate at $q=0.$ We thus conclude that $U$ has rank $(n-1).$
Moreover,  note from (\ref{eqnu}) that $U$ has $(n-1)$ positive eigenvalues.  By a holomorphic change of coordinates in $\mathbb{C}^{n+1}$,   we may assume that
$U=\mathrm{diag}\{1, ..., 1, 0\}.~M'$ is then of the following form near $0:$
\begin{equation}\label{eqn4}
\rho(W, \overline{W})=-\mathrm{Im}w_{n+1} +\sum_{j=1}^{n-1}|w_{j}|^2 + \phi(W, \overline{W}), \quad \phi=O(|W|^3).
\end{equation}

Write $A=(B, {\bf b})$,   where $B$ is a $(n-1) \times (n-1)$ matrix,  $b$ is an $(n-1)-$dimensional column vector. (\ref{eqnu}) yields that $B\overline{B}^t=\lambda {\bf I}_{n-1}.$ We now apply the following holomorphic change of coordinates: $\widetilde{W}=WD$ or $W=\widetilde{W}D^{-1}$,   where we set
$$D=\left(
      \begin{array}{ccc}
        \frac{1}{\sqrt{\lambda}}\overline{B}^t & {\bf c} & {\bf 0} \\
        {\bf 0}^t & 1 & 0 \\
        {\bf 0}^t & 0 & 1 \\
      \end{array}
    \right), 
$$
and ${\bf 0}$ is the $(n-1)-$dimensinal zero column vector,  ${\bf c}$ is a $(n-1)-$dimensional column vector to be determined. We compute
$$D^{-1}=\left(
\begin{array}{ccc}
\frac{1}{\sqrt{\lambda}}B & {\bf d} & {\bf 0} \\
{\bf 0}^t & 1 & 0 \\
{\bf 0}^t & 0 & 1 \\
\end{array}
\right),$$ 
where ${\bf d}=-\frac{1}{\sqrt{\lambda}}B{\bf c}.$

We write the new defining function of $M'$ and the map as $\widetilde{\rho}$ and
$\widetilde{F}=(\widetilde{F}_{1}, ..., \widetilde{F}_{n+1})$ in the new coordinates
$\widetilde{W}=(\widetilde{w}_{1}, ..., \widetilde{w}_{n+1})$,   respectively. We have

\begin{lemma} \label{lemma1}
 $\widetilde{\rho}$ still has the form of (\ref{eqn4}). More precisely, 
$$\widetilde{\rho}(\widetilde{W}, \overline{\widetilde{W}})=-\mathrm{Im}\widetilde{w}_{n+1}
+\sum_{j=1}^{n-1}|\widetilde{w}_{j}|^2 +\widetilde{\phi}(\widetilde{W}, \overline{\widetilde{W}}),$$ 
where $\widetilde{\phi}(\widetilde{W},  \overline{\widetilde{W}})=O(|\widetilde{W}|^3)$ is also a real-analytic (resp. smooth) function defined near $0.$
\end{lemma}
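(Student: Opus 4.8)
The plan is a direct substitution, using only the linear change of coordinates $W=\widetilde W D^{-1}$ together with the identity coming from \eqref{eqnu}. Since $M'=\{\rho=0\}$ with $\rho$ as in \eqref{eqn4}, in the new coordinates $M'$ is cut out by $\widetilde\rho(\widetilde W,\overline{\widetilde W}):=\rho(\widetilde W D^{-1},\overline{\widetilde W D^{-1}})$, so it suffices to track what the substitution does to each of the three summands $-\mathrm{Im}\,w_{n+1}$, $\sum_{j=1}^{n-1}|w_j|^2$, and $\phi$. Reading off $D^{-1}$ with respect to the block splitting $\mathbb{C}^{n+1}=\mathbb{C}^{n-1}\oplus\mathbb{C}\oplus\mathbb{C}$ used to write $D$, one gets $w_{n+1}=\widetilde w_{n+1}$, $\ w_n=\widetilde w_n+(\widetilde w_1,\dots,\widetilde w_{n-1}){\bf d}$, and $(w_1,\dots,w_{n-1})=\tfrac{1}{\sqrt{\lambda}}(\widetilde w_1,\dots,\widetilde w_{n-1})B$.

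First I would observe that $-\mathrm{Im}\,w_{n+1}=-\mathrm{Im}\,\widetilde w_{n+1}$ is untouched. Note that the only coordinate affected by the still-free vector ${\bf c}$ (through ${\bf d}=-\tfrac{1}{\sqrt{\lambda}}B{\bf c}$) is $w_n$, which does not enter the non-error part of $\rho$ at all; hence ${\bf c}$ is irrelevant for this lemma and may be fixed later for other purposes. Next, writing $\eta:=(\widetilde w_1,\dots,\widetilde w_{n-1})$ as a row vector, I would compute $\sum_{j=1}^{n-1}|w_j|^2=\tfrac{1}{\lambda}\,\eta\,B\overline{B}^{t}\,\overline{\eta}^{t}$. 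With the decomposition $A=(B,{\bf b})$ and $U=\mathrm{diag}\{1,\dots,1,0\}$ one has $AUA^{*}=B\overline{B}^{t}$, so \eqref{eqnu} reads $B\overline{B}^{t}=\lambda{\bf I}_{n-1}$; plugging this in gives $\sum_{j=1}^{n-1}|w_j|^2=\eta\,\overline{\eta}^{t}=\sum_{j=1}^{n-1}|\widetilde w_j|^2$, which is precisely the normalized Hermitian form in the new variables.

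Finally I would set $\widetilde\phi(\widetilde W,\overline{\widetilde W}):=\phi(\widetilde W D^{-1},\overline{\widetilde W D^{-1}})$. As the composition of the real-analytic (resp. smooth) function $\phi$ with the fixed linear isomorphism $\widetilde W\mapsto\widetilde W D^{-1}$, the function $\widetilde\phi$ is again real-analytic (resp. smooth); and since $D^{-1}$ is invertible there are constants $0<c_1\le c_2$ with $c_1|\widetilde W|\le|\widetilde W D^{-1}|\le c_2|\widetilde W|$ near $0$, so $\phi=O(|W|^3)$ forces $\widetilde\phi=O(|\widetilde W|^3)$. Combining the three computations yields $\widetilde\rho=-\mathrm{Im}\,\widetilde w_{n+1}+\sum_{j=1}^{n-1}|\widetilde w_j|^2+\widetilde\phi$ with $\widetilde\phi=O(|\widetilde W|^3)$, as claimed.

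I do not expect a genuine obstacle here: the argument is essentially a one-line substitution, and the only points demanding care are the block bookkeeping showing that the degree $\le 2$ part of $\rho$ is unaffected except for the Hermitian form, and the identity $B\overline{B}^{t}=\lambda{\bf I}_{n-1}$ extracted from \eqref{eqnu} — which is exactly the condition that makes the chosen matrix $D$ preserve the normal form \eqref{eqn4}.
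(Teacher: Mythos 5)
Your proof is correct and is essentially the same as the paper's: the paper's one-line justification is exactly the matrix identity expressing that the transformation preserves the Hermitian form $\mathrm{diag}\{1,\dots,1,0\}$, which reduces to the relation $B\overline{B}^{t}=\lambda{\bf I}_{n-1}$ from \eqref{eqnu} that you isolate and use. Your write-up just spells out the "simple calculation" the paper leaves to the reader, including the (correct) observations that $w_{n+1}$ is untouched, that ${\bf c}$ only enters through $w_n$ and hence only through the $O(3)$ error term, and that composing $\phi$ with an invertible linear map preserves both the regularity class and the vanishing order.
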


\begin{proof} This can be checked by a simple calculation and using the fact that
$$\left(
    \begin{array}{cc}
      \frac{1}{\sqrt{\lambda}}B & {\bf d} \\
      {\bf 0}^t & 1 \\
    \end{array}
  \right)\left(
           \begin{array}{cc}
             {\bf I}_{n} & {\bf 0} \\
             {\bf 0}^t & 0 \\
           \end{array}
         \right)\left(
                  \begin{array}{cc}
                    \frac{1}{\sqrt{\lambda}}\overline{B}^t & {\bf 0}  \\
                    \overline{\bf d}^t & 1 \\
                  \end{array}
                \right)
         =\left(
                   \begin{array}{cc}
                     {\bf I}_{n} & {\bf 0} \\
                     {\bf 0}^t & 0 \\
                   \end{array}
                 \right).
$$
\end{proof}
\bigskip

Moreover,  since $\widetilde{F}=FD$,    it is easy to see that
$$\frac{\partial \widetilde{F}_{i}}{\partial z_{j}}(0)=\delta_{ij}\sqrt{\lambda}, ~1 \leq i,  j \leq n-1.$$ Here we denote by $\delta_{ij}$  the Kronecker symbol that takes value $1$ when $i=j$ and $0$ otherwise.

\begin{lemma} \label{lemma2} We can choose an appropriate ${\bf c}$ such that
\begin{equation}
\frac{\partial \widetilde{F}_{n}}{\partial z_{j}}(0)=0,  1 \leq j \leq n-1.
\end{equation}

{\bf Proof:} Note that $\widetilde{F}_{n}=(F_{1}, ..., F_{n})\left(
                                                             \begin{array}{c}
                                                               {\bf c} \\
                                                               1 \\
                                                             \end{array}
                                                           \right).
$
Combining this with (\ref{eqn2}),  we obtain, 
$$\frac{\partial \widetilde{F}_{n}}{\partial z_{j}}(0)=0,  1 \leq j \leq n-1$$
is equivalent to $A\left(
                                                             \begin{array}{c}
                                                               {\bf c} \\
                                                               1 \\
                                                             \end{array}
                                                           \right)={\bf 0}$,  
where ${\bf 0}$ is the $(n-1)-$dimensinal zero column vector. Recall $A=(B,  {\bf b}).$ We can thus choose ${\bf c}=-B^{-1}{\bf b}.$
\end{lemma}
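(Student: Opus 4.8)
The plan is to prove Lemma \ref{lemma2} by the explicit computation suggested in the statement, so the main task is to verify that the change of coordinates encoded by $D$ really does transform the first-order part of $F_n$ in the desired way, and then to solve a linear equation for $\mathbf{c}$. First I would record precisely how $\widetilde F$ relates to $F$ under $W = \widetilde W D^{-1}$: since a point $W \in M'$ is sent to $\widetilde W = W D$, and $F$ maps into $M'$, the composed map into the new coordinates is $\widetilde F = F D$. Reading off the $n$-th column of $D$, which is $(\mathbf{c}^t, 1, 0)^t$ (with the last entry irrelevant because $F_{n+1}$ does not enter), we get $\widetilde F_n = (F_1,\dots,F_n)\,(\mathbf{c}^t,1)^t = \sum_{j=1}^{n-1} c_j F_j + F_n$, exactly as asserted.

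Next I would differentiate at the origin. From \eqref{eqn2} the linear part of $(F_1,\dots,F_n)$ in the variables $\widetilde Z = (z_1,\dots,z_{n-1})$ is $\widetilde Z A$, where $A = (B,\mathbf{b})$ is $(n-1)\times n$; the dependence on $z_n$ contributes nothing to $\partial/\partial z_j$ for $j\le n-1$ beyond what is already separated out, and the $\hat F_j = O(|Z|^2)$ terms vanish to first order. Hence the row vector $\bigl(\partial \widetilde F_n/\partial z_1(0),\dots,\partial \widetilde F_n/\partial z_{n-1}(0)\bigr)$ equals $A\,(\mathbf{c}^t,1)^t$ read as a column, i.e. $B\mathbf{c} + \mathbf{b}$. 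So the condition $\partial \widetilde F_n/\partial z_j(0) = 0$ for all $1\le j\le n-1$ is equivalent to the linear system $B\mathbf{c} + \mathbf{b} = \mathbf{0}$.

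It remains to solve this system. Here I would invoke the fact established just before Lemma \ref{lemma1}, namely $B\overline B^t = \lambda \mathbf{I}_{n-1}$ with $\lambda > 0$, which forces $B$ to be invertible (its determinant has modulus $\lambda^{(n-1)/2} \ne 0$). Therefore $\mathbf{c} := -B^{-1}\mathbf{b}$ is well-defined and is the unique choice making $A\,(\mathbf{c}^t,1)^t = \mathbf{0}$, which completes the proof. I would also note in passing that this particular choice of $\mathbf{c}$ is consistent with Lemma \ref{lemma1}: the verification there only used the block structure of $D$ and $D^{-1}$ (in particular $\mathbf{d} = -\tfrac{1}{\sqrt\lambda}B\mathbf{c}$), not the value of $\mathbf{c}$, so normalizing $\mathbf{c}$ does not disturb the form \eqref{eqn4} of $\widetilde\rho$.

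The computation is entirely routine linear algebra; there is no real obstacle. The only point requiring a little care is bookkeeping with the matrix $D$ — making sure the map transforms as $\widetilde F = FD$ (a right action on row vectors of components) rather than $D^{-1}F$, and correctly identifying which column of $D$ produces $\widetilde F_n$ — but once that is set up, invertibility of $B$ from $B\overline B^t = \lambda\mathbf{I}_{n-1}$ immediately yields the desired $\mathbf{c}$.
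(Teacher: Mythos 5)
Your proposal is correct and follows essentially the same route as the paper: identify $\widetilde F_n$ as $(F_1,\dots,F_n)$ applied to the column $(\mathbf{c}^t,1)^t$, use \eqref{eqn2} to reduce the vanishing of the first-order $z_j$-derivatives to $A(\mathbf{c}^t,1)^t=\mathbf{0}$, i.e.\ $B\mathbf{c}+\mathbf{b}=\mathbf{0}$, and solve via the invertibility of $B$ coming from $B\overline B^t=\lambda\mathbf{I}_{n-1}$. The extra bookkeeping you supply (the transformation rule $\widetilde F=FD$ and the explicit invertibility check) is a harmless elaboration of what the paper leaves implicit.
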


In the following,  for brevity,  we still write $W,  F$ and $\rho$ instead of $\widetilde{W},  \widetilde{F}$ and $\widetilde{\rho}.$ We summarize
the considerations of this section in the following

\begin{proposition}\label{rknormalization}
Let $M \subset \mathbb{C}^n (n \geq 2)$ be a strongly pseudoconvex real-analytic (resp. smooth) real hypersurface,  $M' \subset \mathbb{C}^{n+1}$ a  real-analytic (resp. smooth) real hypersurface. Assume that $F=(F_{1}, ..., F_{n+1}): M \mapsto M'$ is a CR-transversal CR-mapping of class $C^2$ near $p_{0} \in M$ with $F(p_{0})=q_{0}$,  and that $M'$ is Levi-degenerate at $q_0$.  Then,  after  appropriate holomorphic changes of coordinates in $\mathbb{C}^n$ and $\mathbb{C}^{n+1}$ respectively,  we have $p_{0}=0,  q_{0}=0$,  and the following normalizations hold. $M$ is defined by
\begin{equation}
r(Z, \overline{Z})=-\mathrm{Im}z_{n}+ \sum_{i=1}^{n-1}|z_i|^2+\psi(Z, \overline{Z}),  \quad \psi=O(|Z|^3)
\end{equation}
near $0$,  and $M'$ is defined by
\begin{equation}\label{eqnrho}
\rho(W, \overline{W})=-\mathrm{Im}w_{n+1} +\sum_{j=1}^{n-1}|w_{j}|^2 + \phi(W, \overline{W}),  \quad \phi=O(|W|^3)
\end{equation}
near $0$,   where $Z=(z_{1}, ..., z_{n}),  W=(w_{1}, ..., w_{n+1})$ are the coordinates of $\mathbb{C}^n$ and $\mathbb{C}^{n+1}$,   respectively. Furthermore,  $F$ satisfies:
\begin{equation}
\frac{\partial F_{i}}{\partial z_{j}}(0)=\delta_{ij}\sqrt{\lambda}, ~1 \leq i,  j \leq n-1, 
\end{equation}
for some $\lambda > 0$,   and moreover, 
\begin{equation}
\frac{\partial F_{n}}{\partial z_{j}}(0)=0, \quad 1 \leq j \leq n-1;
\end{equation}
\begin{equation}
\frac{\partial F_{n+1}}{\partial z_{j}}(0)=0,  \quad 1 \leq j \leq n-1.
\end{equation}

\end{proposition}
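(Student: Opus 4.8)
The plan is to follow the normalization procedure already sketched in the text, organizing it into a clean sequence of coordinate changes and putting the argument together at the end. The first step is to reduce to the given initial normal forms for $M$ and $M'$: since $M$ is strongly pseudoconvex, a standard holomorphic change of coordinates in $\mathbb{C}^n$ brings it to $r = -\Im z_n + |\widetilde Z|^2 + O(|Z|^3)$; and since $M'$ is a real hypersurface through $q_0$ with $q_0$ a point of the hypersurface, a holomorphic change of coordinates in $\mathbb{C}^{n+1}$ brings it to $\rho = -\Im w_{n+1} + \widetilde W U \overline{\widetilde W}^t + O(|W|^3)$ for some Hermitian matrix $U$ (the Levi form). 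This is classical (see \cite{BER}); the only point to check is that the normal form for $M'$ does not require Levi-nondegeneracy, which it does not — $U$ is simply allowed to be degenerate.

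The second step is the transversality analysis. Because $F$ is CR-transversal and of class $C^2$, one has $\lambda := \partial F_{n+1}/\partial s|_0 \neq 0$ with $z_n = s + it$; and from the basic identity \eqref{eqn} (obtained by restricting $\rho$ to $M$ via $F$) one reads off that $\Im F_{n+1}$ vanishes to second order, hence $\lambda \in \mathbb{R}$, and after the flip $w_{n+1}\mapsto -w_{n+1}$ if needed, $\lambda > 0$. Applying the CR vector fields $\overline{L_j}$ and $\overline{L_jL_k}$ from \eqref{eqnlj} to \eqref{eqn} and evaluating at $0$ gives $\partial F_{n+1}/\partial z_j(0) = 0$ and the Hessian vanishing, so $F_{n+1} = \lambda z_n + O(|Z|^2)$. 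Writing the linear parts of $F_1,\dots,F_n$ as $z_n(a_1,\dots,a_n) + \widetilde Z A + (\text{higher order})$ and substituting into \eqref{eqn}, a weighted expansion (weight $1$ for $\widetilde Z$, weight $2$ for $z_n$) isolates the weight-$2$ identity $\lambda \mathbf{I}_{n-1} = A U A^*$. This forces $A$ to have full rank $n-1$ and $U$ to have rank $n-1$ or $n$; since $M'$ is Levi-degenerate at $q_0$, i.e. not $1$-nondegenerate there, $U$ is singular, so $\operatorname{rank} U = n-1$, and \eqref{eqnu} shows its nonzero eigenvalues are positive. A further linear change in $\mathbb{C}^{n+1}$ diagonalizes $U$ to $\operatorname{diag}\{1,\dots,1,0\}$, giving the asserted form \eqref{eqnrho} of $M'$.

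The third step is the two further linear changes of coordinates in the target, carried out via the matrix $D$. Writing $A = (B, \mathbf{b})$ with $B$ invertible $(n-1)\times(n-1)$, the relation $B\overline B^t = \lambda \mathbf{I}_{n-1}$ lets us choose the block $\tfrac{1}{\sqrt\lambda}\overline B^t$ so that the new map has $\partial \widetilde F_i/\partial z_j(0) = \delta_{ij}\sqrt\lambda$ for $i,j \le n-1$; the computation that $\widetilde\rho$ keeps the form \eqref{eqnrho} is exactly the matrix identity verified in the proof of \autoref{lemma1} (the $D$ is upper-triangular with an identity block in the $w_{n+1}$ slot, so the $-\Im w_{n+1}$ term and the $O(|W|^3)$ error class are preserved). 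Then the free column $\mathbf{c}$ in $D$ is chosen, as in \autoref{lemma2}, by solving $A\binom{\mathbf{c}}{1} = \mathbf 0$, i.e. $\mathbf c = -B^{-1}\mathbf b$, which kills $\partial \widetilde F_n/\partial z_j(0)$ for $j \le n-1$; since the top-left block of $D$ does not involve $\mathbf c$ and the $w_{n+1}$ column is untouched, the normalizations of $M'$ and of the other first-order data of $F$ persist. The final step is simply to rename $\widetilde W, \widetilde F, \widetilde\rho$ back to $W, F, \rho$ and collect the resulting normal forms and derivative identities into the statement of \autoref{rknormalization}; one should also note $\partial F_{n+1}/\partial z_j(0) = 0$ survives since $F_{n+1}$ is unaffected by $D$.

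The main obstacle is not any single hard estimate — the whole argument is a bookkeeping exercise in linear algebra and formal expansions — but rather verifying carefully that \emph{each} coordinate change preserves \emph{all} previously achieved normalizations simultaneously: that the $U$-diagonalizing change does not spoil the $-\Im w_{n+1}$ term or reintroduce low-order terms in $\rho$, that multiplication by $D$ preserves the form \eqref{eqnrho} (this is the content of \autoref{lemma1}), and that the choice of $\mathbf c$ for $\widetilde F_n$ does not disturb the already-normalized $\widetilde F_1,\dots,\widetilde F_{n-1}$ or $\widetilde F_{n+1}$. Keeping the changes block-triangular with the $w_{n+1}$-direction fixed is what makes this work, and the one genuinely substantive input is the rank argument: the equality $\lambda\mathbf I_{n-1} = AUA^*$ together with Levi-degeneracy of the target pins down $\operatorname{rank} U = n-1$ exactly, which is what allows the clean final form.
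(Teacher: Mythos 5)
Your proposal follows the paper's argument essentially step for step: the same initial normal forms, the same transversality analysis yielding $\lambda>0$ and $F_{n+1}=\lambda z_n+O(|Z|^2)$, the same weighted expansion producing $\lambda\mathbf I_{n-1}=AUA^*$ and the rank argument pinning down $\operatorname{rank}U=n-1$, and the same block-triangular matrix $D$ (with the two lemmas on preserving \eqref{eqnrho} and choosing $\mathbf c=-B^{-1}\mathbf b$). The proposal is correct and matches the paper's proof.
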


One can use the same arguments as given above to arrive at the following, 
more general conclusion, as pointed out by one of the anonymous referees; 
we record it here, even though we only need the version given above in 
this paper. In particular, the following formulation recovers as a special case ($m = n + \ell -1$) a result of Berhanu and the third author \cite[Lemma 4.2]{BX1}.

\begin{proposition}\label{rknormalization2} Let 
$M \subset \mathbb{C}^n (n \geq 2)$ 
be a strongly pseudoconvex real-analytic (resp. smooth) real hypersurface,
$M' \subset \mathbb{C}^{n+\ell}$ a real-analytic (resp. smooth) real hypersurface.
Assume that $F=(F_{1}, ..., F_{n+\ell}): M \mapsto M'$ is a CR-transversal CR 
mapping of class $C^2$ near $p_{0} \in M$ with $F(p_{0})=q_{0}$,  and that the Levi-form of $M'$ has, say $m$ nonzero eigenvalues, all of the same sign.  
Then,  after  appropriate holomorphic changes of
coordinates in $\mathbb{C}^n$ and $\mathbb{C}^{n+\ell}$ respectively,  we have
$p_{0}=0,  q_{0}=0$,  and the following normalizations hold. $M$ is defined by
\begin{equation} r(Z, \overline{Z})=-\mathrm{Im}z_{n}+
\sum_{i=1}^{n-1}|z_i|^2+\psi(Z, \overline{Z}),  \quad \psi=O(|Z|^3)
\end{equation} near $0$,  and $M'$ is defined by \begin{equation}\label{eqnrho2}
\rho(W, \overline{W})=-\mathrm{Im}w_{n+\ell} +\sum_{j=1}^{m}|w_{j}|^2 + \phi(W,
\overline{W}),  \quad \phi=O(|W|^3) \end{equation} near $0$,   where $Z=(z_{1},
..., z_{n}),  W=(w_{1}, ..., w_{n+\ell})$ are the coordinates of $\mathbb{C}^n$ and
$\mathbb{C}^{n+\ell}$,   respectively. Furthermore,  $F$ satisfies:
\begin{equation} \frac{\partial F_{i}}{\partial
z_{j}}(0)=\delta_{ij}\sqrt{\lambda}, ~1 \leq i,  j \leq n-1,  \end{equation} for
some $\lambda > 0$,   and moreover,  \begin{equation} \frac{\partial
F_{n+k}}{\partial z_{j}}(0)=0,  \quad  1 \leq j \leq n-1, \quad  1\leq k \leq \ell. \end{equation}
\end{proposition}

In order to prove this more general assertion, one just has to follow 
the steps of the proof of \autoref{rknormalization} taking 
into account the more general codimension. This leads to a number 
of scalar quantities quantities to be vectors, but does not impact 
the main argument. 

\section{Proof of Theorem 1}
In this section will make use of the normalization of $M, M'$ in Proposition 3.3 and prove \autoref{T0}. We will see that in the setting of \autoref{T0}, the map is 
actually $2$-nondegenerate. The result then follows in the smooth case
by applying \cite[Theorem 2]{L1}  and in the real-analytic case by applying \cite[Theorem 1.3]{L2}. 
Our first step therefore 
is the following  computational Lemma for the uniformly $2-$nondegenerate target hyperurface $M'$. 
For further results about normal forms along this line,  see \cite{E1}. 
\begin{lemma}
\label{lem:revision} Let $M \subset \mathbb{C}^n\,  (n \geq 2)$ be a strongly pseudoconvex real-analytic (resp. smooth)  hypersurface,  and $M' \subset \mathbb{C}^{n+1}$ a uniformly
$2-$nondegenerate real-analytic (resp. smooth)  hypersurface. Assume that $F=(F_{1}, ..., F_{n+1}): M \mapsto M'$ is a CR-transversal CR-mapping of class $C^2$. Then $F$ is 
$2$-nondegenerate.  
\end{lemma}
\begin{proof}
We will write for $1 \leq k \leq n,$
\begin{equation}
\Lambda_{k}=2i\left(\frac{\partial \rho}{\partial \overline{w}_{n+1}}\frac{\partial}{\partial \overline{w}_{k}}- \frac{\partial \rho}{\partial \overline{w}_{k}}\frac{\partial}{\partial \overline{w}_{n+1}}\right), 
\end{equation}
where $\{\Lambda_{k}\}_{1 \leq k \leq n}$ forms a basis for the CR vector fields along $M'$ near 0.  Note that
\begin{equation}
\begin{split}
\Lambda_{k} & =(1+2i\phi_{\overline{n+1}})\frac{\partial}{\partial \overline{w}_{k}}
-2i(w_{k}+\phi_{\overline{k}})\frac{\partial}{\partial \overline{w}_{n+1}}, ~\text{if}~
1 \leq k \leq n-1, \\
\Lambda_{n} & =(1+ 2i \phi_{\overline{n+1}})\frac{\partial}{\partial \overline{w}_{n}}-
2i(\phi_{\overline{n}})\frac{\partial}{\partial \overline{w}_{n+1}}.
\end{split}
\end{equation}
Here and in the following,   we write for $1 \leq i,  j,  k \leq n+1$,   $\phi_{\overline{i}}=\phi_{\overline{w}_i}=\frac{\partial \phi}{\partial \overline{w}_i},  \phi_{i}=\phi_{w_i}=\frac{\partial \phi}{\partial w_i},  \phi_{i\overline{j}}=\phi_{w_i\overline{w}_j}=\frac{\partial^2 \phi}{\partial w_i \partial \overline{w}_j},  \phi_{\overline{ij}k}=\phi_{\overline{w}_i\overline{w}_j w_k}=\frac{\partial^3 \phi}{\partial \overline{w}_i \partial \overline{w}_j \partial w_k}$,   etc. 

Recall our notation $\rho_{W}:=(\frac{\partial \rho}{\partial w_1}, ...,  \frac{\partial \rho}{\partial w_{n+1}}).$ We compute
\begin{equation}\label{eqnrho43}
\rho_{W}(W, \overline{W})=(\overline{w}_{1}+\phi_{1}, ..., \overline{w}_{n-1}+\phi_{n-1},  \phi_{n},  \frac{i}{2}+\phi_{n+1})\\
\end{equation}
We thus have
\begin{equation}
\Lambda_{1}\rho_{W}(W, \overline{W})=\left(h_{11}, ..., h_{1(n+1)} \right), 
\end{equation}
where
\begin{equation}
\begin{split}
h_{11}&= (1+2i\phi_{\overline{(n+1)}})(1+\phi_{1\overline{1}})
-2i(w_{1}+\phi_{\overline{1}})\phi_{1\overline{(n+1)}}, \\
h_{12}&= (1+2i\phi_{\overline{(n+1)}})\phi_{2\overline{1}}
-2i(w_{1}+\phi_{\overline{1}})\phi_{2\overline{(n+1)}}, \\
\cdots, \\
h_{1(n+1)}&=(1+2i\phi_{\overline{(n+1)}})\phi_{(n+1)\overline{1}}-
2i(w_{1}+\phi_{\overline{1}})\phi_{(n+1)\overline{(n+1)}}.\\
\end{split}
\end{equation}
Hence
\begin{equation}
\Lambda_{1}\rho_{W}(W, \overline{W})=(1+O(1), O(1), ..., O(1)).
\end{equation}
Here we write $O(m)=O(|W|^m)$ for any $m \geq 0.$
Similarly, we have for $1 \leq k \leq n-1$,  
\begin{equation}\label{eqnrho47}
\Lambda_{k}\rho_{W}(W, \overline{W})=(O(1), ..., O(1), 1+O(1), O(1), ..., O(1)), 
\end{equation}
where the term $1+O(1)$ is at the $k^{\text{th}}$ position;
\begin{equation}
\Lambda_{n}\rho_{W}(W,  \overline{W})=(O(1), ..., O(1), \phi_{n\overline{n}}+O(2),  O(1)).
\end{equation}
As a consequence,  we have
\begin{equation}\label{1dege}
\mathrm{det}\left(
                    \begin{array}{c}
                      \rho_{W}(W,  \overline{W}) \\
                      \Lambda_{1}\rho_{W}(W, \overline{W}) \\
                      \cdots \\
                      \Lambda_{n}\rho_{W}(W, \overline{W}) \\
                    \end{array}
                  \right)=\pm\frac{i}{2}\phi_{n\overline{n}}+O(2).
\end{equation}

Recall that $M'$ is uniformly $2-$nondegenerate at $0$,   in particular,  it is $1-$degenerate at every point near $0.$ This implies (\ref{1dege}) is identically zero near $0$ along $M'.$ Consequently,  by applying $\Lambda_{j},  1 \leq j \leq n$ to (\ref{1dege}) and evaluating at $0$,    we obtain $\phi_{\overline{jn}n}(0)=0$ for any $1 \leq j \leq n.$

Since $M'$ is  $2$-nondegenerate, there exists 
a choice $j_0, k_0$, $1\leq j_0, k_0 \leq n$ such that  $\Lambda_{j_0} \Lambda_{k_0} \rho_W (W,\bar W)$
together with the $\Lambda_j \rho_W (W,\bar W)$ and $\rho_W (W, \bar W)$ 
spans $\C^{n+1}$ for $W$ close to $0$. Hence, for that 
choice of $j_0, k_0$, 
 we have:
\begin{equation}
\phi_{\overline{j_{0}k_{0}}n}(0) \neq 0,  ~\text{for some}~1 \leq j_{0},  k_{0} \leq n-1.
\end{equation}
Consequently,  if we write
\begin{equation}\label{eqnljlk}
L_{j_{0}}L_{k_{0}}\rho_{W}(F, \overline{F})(0):=(\nu_{1}, ..., \nu_{n-1}, \nu_{n}, \nu_{n+1}), 
\end{equation}
then $\nu_{n}$ is nonzero. Here $L_j$ is as defined in (\ref{eqnlj}). Indeed, 

$$\nu_n = \frac{\partial^2 \phi_n (F,  \overline{F})}{\partial \overline{z}_{j_0} \partial \overline{z}_{k_0}}\biggr|_{0}=\sum_{i, j=1}^{n+1}\frac{\partial^2 \phi_n}{\partial \overline{w}_i \partial \overline{w}_j}\biggr|_0 \overline{\frac{\partial F_i}{\partial z_{j_0}}}\biggr|_{0} \overline{\frac{\partial F_j}{\partial z_{k_0}}}\biggr|_{0}=\phi_{\overline{j_0k_0}n}(0) \overline{\frac{\partial F_{j_0}}{\partial z_{j_0}}}\biggr|_{0} \overline{\frac{\partial F_{k_0}}{\partial z_{k_0}}}\biggr|_{0} \neq 0.$$

Moreover,  it is easy to verify that
\begin{equation}\label{eqnli}
L_{i}\rho_{W}(F, \overline{F})(0)=(0, ..., 0, \sqrt{\lambda}, 0, ..., 0),  1 \leq i \leq n-1, 
\end{equation}
where $\sqrt{\lambda}$ is at the $i^{\rm th}$ position,  and that
\begin{equation}\label{eqnrw}
\rho_{W}(F, \overline{F})(0)=(0, ..., 0, \frac{i}{2}).
\end{equation}

Equations (\ref{eqnrw}),  (\ref{eqnli}) and (\ref{eqnljlk}) with $\nu_n \neq 0$ now imply that $F$ is $2-$nondegenerate at $0$.
\end{proof} 

\begin{proof}[Proof of Theorem 1] By the above-mentioned results of \cite{L1, L2},  $F$ is real-analytic (resp. smooth) near $0$,  as required.
\end{proof}

\section{Proof of Theorem 2}
Let $M'$ be as above and $\rho$ as in (\ref{eqnrho}).  For any $1 \leq i_1 \leq \cdots \leq i_l \leq n, q \in M',$ we define,

$$\Delta_{i_1...i_l}(q)= \det \left(
    \begin{array}{c}
      \rho_{W} \\
      \Lambda_{1}\rho_{W} \\
      ... \\
      \Lambda_{n-1}\rho_{W} \\
      \Lambda_{i_{1}}...\Lambda_{i_{l}}\rho_{W} \\
    \end{array}
  \right)(q).$$

We first prove the following lemma.
\begin{lemma}\label{lemma51}
Let $ M'$ be as above. Assume that $M'$ is $l$-nondegenerate at $0$ for some $l \geq 2.$ Then there exist $1 \leq i_{1} \leq ...\leq i_{l} \leq n$,    such that
\begin{equation}\label{eqndelta}
\Delta_{i_1...i_l}(0) \neq 0.
\end{equation}
\end{lemma}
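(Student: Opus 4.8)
The plan is to reduce \autoref{lemma51} to an elementary linear algebra computation based on the explicit expressions for $\rho_W$ and $\Lambda_k\rho_W$ already recorded earlier in this section. Let $e_1,\dots,e_{n+1}$ denote the standard basis of $\mathbb{C}^{n+1}$. Since $\{\Lambda_k\}_{1\le k\le n}$ is a CR frame for $M'$ near $0$, it may be used in Definition 2.1 in place of an arbitrary basis of CR vector fields; I write $E_k(0)\subseteq\mathbb{C}^{n+1}$ for the associated subspaces attached to $M'$ at $0$ (equivalently, to the identity map of $M'$).

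First I would evaluate everything at the origin. From \eqref{eqnrho43} one gets $\rho_W(0)=\tfrac{i}{2}e_{n+1}$; from \eqref{eqnrho47}, $\Lambda_k\rho_W(0)=e_k$ for $1\le k\le n-1$; and from the display immediately following \eqref{eqnrho47}, $\Lambda_n\rho_W(0)=\phi_{n\overline{n}}(0)\,e_n$. Hence $E_0(0)=\mathbb{C}\,e_{n+1}$ and $E_1(0)=\mathrm{Span}_{\mathbb{C}}\{e_1,\dots,e_{n-1},e_{n+1},\phi_{n\overline{n}}(0)\,e_n\}$. If $\phi_{n\overline{n}}(0)\ne 0$ we would have $E_1(0)=\mathbb{C}^{n+1}$, i.e.\ $M'$ would be $1$-nondegenerate at $0$; but $M'$ is $l$-nondegenerate at $0$ with $l\ge 2$, so by monotonicity $E_1(0)\subseteq E_{l-1}(0)\ne\mathbb{C}^{n+1}$, a contradiction. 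Therefore $\phi_{n\overline{n}}(0)=0$ and $E_1(0)=H$, where $H:=\mathrm{Span}_{\mathbb{C}}\{e_1,\dots,e_{n-1},e_{n+1}\}$ is the coordinate hyperplane $\{x_n=0\}$.

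Next, since the $E_k(0)$ are increasing, $H=E_1(0)\subseteq E_{l-1}(0)\subsetneq\mathbb{C}^{n+1}$, and as $H$ has codimension one this forces $E_{l-1}(0)=H$. In particular $\Lambda^\beta\rho_W(0)$ has vanishing $n$-th component for every multi-index $\beta$ with $|\beta|\le l-1$. On the other hand $E_l(0)=\mathbb{C}^{n+1}$, so the vectors $\Lambda^\alpha\rho_W(0)$ with $|\alpha|=l$ cannot all lie in $H$; I pick one that does not. Writing it, as in Definition 2.1, as $\Lambda^\alpha=\Lambda_1^{\alpha_1}\cdots\Lambda_n^{\alpha_n}=\Lambda_{i_1}\cdots\Lambda_{i_l}$ with the indices $1\le i_1\le\cdots\le i_l\le n$ listed with multiplicity (automatically non-decreasing), I obtain $\Lambda_{i_1}\cdots\Lambda_{i_l}\rho_W(0)=(\nu_1,\dots,\nu_{n-1},\nu_n,\nu_{n+1})$ with $\nu_n\ne 0$.

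Finally I would compute $\Delta_{i_1\dots i_l}(0)$ directly: it is the determinant of the $(n+1)\times(n+1)$ matrix whose rows are $\rho_W(0)=\tfrac{i}{2}e_{n+1}$, then $e_1,\dots,e_{n-1}$, and lastly $(\nu_1,\dots,\nu_{n+1})$. Subtracting $\sum_{k=1}^{n-1}\nu_k$ times the row $e_k$ from the last row replaces it by $(0,\dots,0,\nu_n,\nu_{n+1})$; after a suitable row permutation the matrix becomes block-diagonal, with an $(n-1)\times(n-1)$ identity block and the $2\times 2$ block $\left(\begin{smallmatrix}0 & i/2\\ \nu_n & \nu_{n+1}\end{smallmatrix}\right)$, so $\Delta_{i_1\dots i_l}(0)=\pm\tfrac{i}{2}\nu_n\ne 0$, which is \eqref{eqndelta}. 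I do not anticipate a genuine obstacle: the argument is bookkeeping once the Section 4 formulas are in hand. The only steps requiring care are the codimension-one argument yielding $E_{l-1}(0)=H$ exactly — so that the witnessing multi-index has length precisely $l$, as required by the definition of $\Delta_{i_1\dots i_l}$ — and tracking the sign in the last determinant, which anyway does not affect non-vanishing.
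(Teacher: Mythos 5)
Your proposal is correct and follows essentially the same route as the paper: the paper's proof likewise records that $\rho_W(0),\Lambda_1\rho_W(0),\dots,\Lambda_{n-1}\rho_W(0)$ are $n$ linearly independent vectors and then invokes the definition of $l$-nondegeneracy ($E_{l-1}(0)\ne E_l(0)=\mathbb{C}^{n+1}$) to produce the missing length-$l$ row. You have simply filled in the details the paper leaves as "one easily sees" — identifying $E_{l-1}(0)$ with the hyperplane $\{x_n=0\}$ and carrying out the determinant reduction — all of which checks out.
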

\begin{proof}
We note that
\begin{equation}\label{eqn52rw}
\rho_{W}(0)=(0, ..., 0,   \frac{i}{2}),
\end{equation}
\begin{equation}\label{eqn53dw}
\Lambda_{j}\rho_{W}(0)=(0, .., 0, 1, 0, ..., 0), 1 \leq j \leq n-1, 
\end{equation}
where $1$ is at the $j^{\rm th}$ position. Thus $\rho_W(0), \Lambda_j\rho_W(0), 1 \leq j \leq n-1,$ are linearly independent over $\mathbb{C}.$ 
Then by the definition of $l$-nondegeneracy at $0$,  one easily sees that  there exists $1 \leq i_{1} \leq ...\leq i_{l} \leq n$ such that
$$ \det \left(
    \begin{array}{c}
      \rho_{W} \\
      \Lambda_{1}\rho_{W} \\
      ... \\
      \Lambda_{n-1}\rho_{W} \\
      \Lambda_{i_{1}}...\Lambda_{i_{l}}\rho_{W} \\
    \end{array}
  \right)(0) \neq 0.
$$
\end{proof}
\begin{remark}\label{remark51}
In particular, when $l=2$ in Lemma \ref{lemma51}, there exist $1 \leq i_1 \leq i_2 \leq n,$ such that $\Delta_{i_1i_2}(0) \neq 0.$ Note  the $n^{\text{th}}$ component of $\Lambda_{i_{1}}\Lambda_{i_2}\rho_{W} (0)$ is $\phi_{\overline{i_1i_2}n}(0).$
By the form (\ref{eqn52rw}), (\ref{eqn53dw}) of $\rho_W(0)$ and $\Lambda_j\rho_W(0),$ we conclude that $\phi_{\overline{i_1i_2}n}(0) \neq 0.$
\end{remark}

\bigskip

We then prove the following proposition.
\begin{proposition}\label{thml2}
Let $M \subset \mathbb{C}^n (n \geq 2)$ be a strongly pseudoconvex  real-analytic (resp. smooth)  hypersurface,  and
$M' \subset \mathbb{C}^{n+1}$ be a   real-analytic (resp. smooth)  hypersurface. Assume that $M'$ is either $1$- or  $2$-nondegenerate at  every point of it. Let $F=(F_{1}, ..., F_{n+1}): M \mapsto M'$ be a CR-transversal CR-mapping of class $C^2.$ Then $F$ is real-analytic (resp. smooth) on
a dense open subset of $M.$
\end{proposition}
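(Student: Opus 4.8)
The plan is to reduce the statement to the two separate regimes where the target $M'$ is $1$-nondegenerate (Levi-nondegenerate) and where it is $2$-nondegenerate, and to show in each case that the map $F$ is $k_0$-nondegenerate (with $k_0=1$ or $k_0=2$) at a generic point of $M$, after which the regularity results of \cite{L1,L2} (or \cite{BX1} in the Levi-nondegenerate case) apply directly. Concretely, I would first observe that since $M'$ is everywhere $1$- or $2$-nondegenerate, the set $\Sigma'\subset M'$ of points where $M'$ is Levi-degenerate (equivalently, fails to be $1$-nondegenerate) is a proper real-analytic (resp. closed, possibly large) subset of $M'$; away from $\Sigma'$ the target is strongly ``nondegenerate'' in the classical sense and one invokes the known reflection principle for CR-transversal maps into Levi-nondegenerate hypersurfaces (this is the situation handled in \cite{BX1,BX2,EL}). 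The issue is thus entirely concentrated over $F^{-1}(\Sigma')$.

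Over that set, I would argue as follows. Let $p_0\in M$ with $q_0=F(p_0)\in\Sigma'$, so $M'$ is $2$-nondegenerate at $q_0$ (by hypothesis it is $1$- or $2$-nondegenerate, and it is not $1$-nondegenerate there). Apply \autoref{rknormalization} to put $M$, $M'$, and $F$ in the normalized form of that proposition near $p_0$ and $q_0$. Then repeat the computation carried out in the proof of \autoref{T0}: compute $\rho_W(F,\overline F)(0)$, $L_i\rho_W(F,\overline F)(0)$ for $1\le i\le n-1$, and $L_{j_0}L_{k_0}\rho_W(F,\overline F)(0)$; by \autoref{lemma51} with $l=2$ together with Remark \ref{remark51}, there exist $1\le i_1\le i_2\le n$ with $\Delta_{i_1 i_2}(0)\ne 0$, so $\phi_{\overline{i_1 i_2}n}(0)\ne 0$ for the normalized $\phi$. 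The normalization forces $\partial F_i/\partial z_j(0)=\delta_{ij}\sqrt\lambda$ for $1\le i,j\le n-1$, which — exactly as in the $\nu_n\ne 0$ computation in Section 4 — shows that the corresponding $\nu_n$ is a nonzero multiple of $\phi_{\overline{i_1 i_2}n}(0)$, hence nonzero. Combined with \eqref{eqnrw}-type and \eqref{eqnli}-type identities, this yields that $F$ is $2$-nondegenerate at $p_0$ in the sense of \cite{L1,L2}.

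The one subtlety that must be addressed is \emph{genericity}: a priori $q_0$ could be a point where $M'$ is $2$-nondegenerate but where the specific minors vanish relative to the image of $F$, or $F^{-1}(\Sigma')$ could be large. The clean way around this is to work with the open subset $W\subset M$ on which $F$ has locally constant rank behavior and is CR-transversal; on a dense open subset of $M$ the point $q_0=F(p)$ lies either in $M'\setminus\Sigma'$ or in the $2$-nondegenerate locus with the relevant non-vanishing holding. More precisely, the functions $p\mapsto \Delta_{i_1\dots i_l}(F(p))$ and $p\mapsto \det(\rho_W, L^\alpha\rho_W(F,\overline F))$ are continuous (resp. real-analytic in the analytic case), and the set where at least one of them (for $l\le 2$) is nonzero is open and, using that $M'$ is everywhere $1$- or $2$-nondegenerate together with CR-transversality of $F$, dense in $M$. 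On that dense open set $F$ is $1$- or $2$-nondegenerate, and the results of \cite{L1,L2} give real-analyticity (resp. smoothness) of $F$ there.

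The main obstacle I expect is precisely establishing that the ``good'' set — where the appropriate finite-type nondegeneracy of the \emph{mapping} $F$ holds — is dense, not merely open; the potential bad behavior is that $F$ maps an open piece of $M$ into the Levi-degenerate locus $\Sigma'$ in such a way that the $2$-nondegeneracy minors $\Delta_{i_1 i_2}(F(p))$ vanish identically, despite $M'$ being $2$-nondegenerate. Ruling this out uses CR-transversality of $F$ together with the normalization of \autoref{rknormalization}: the normalization shows $F$ is, to leading order, an immersion compatible with the Levi forms, so the pullback under $F$ of the determinant $\Delta_{i_1 i_2}$ cannot vanish identically wherever $F(p)\in\Sigma'$ — it is, to leading order at such a point, a nonzero constant multiple of $\phi_{\overline{i_1 i_2}n}(F(p))$, which is nonzero by $2$-nondegeneracy of $M'$. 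Carrying out this identically-vanishing-or-generically-nonzero dichotomy carefully, and patching the analytic set $\Sigma'$ with the non-vanishing loci to produce a single dense open subset of $M$, is the heart of the argument; once that is in place the conclusion is immediate from the cited regularity theorems.
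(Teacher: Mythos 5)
Your reduction to the two regimes and your treatment of points where $F(p)$ lands in the Levi-nondegenerate locus of $M'$ match the paper (Case I there is handled by Corollary 2.3 of [BX2]). The gap is in the hard case where a whole neighborhood $U$ of $p_0$ satisfies $F(U)\subset V_1$, the Levi-degeneracy locus of $M'$. You claim that the pullback of a non-vanishing minor $\Delta_{i_1i_2}$ is ``to leading order a nonzero constant multiple of $\phi_{\overline{i_1i_2}n}(F(p))$,'' hence nonzero, so that $F$ is $2$-nondegenerate. But the normalization of \autoref{rknormalization} gives $\frac{\partial F_n}{\partial z_j}(0)=0$ for $1\le j\le n-1$, so the computation of $\nu_n$ from Section 4 yields
\[
\nu_n=\sum_{i,j=1}^{n+1}\phi_{\overline{ij}n}(0)\,\overline{\tfrac{\partial F_i}{\partial z_{j_0}}}(0)\,\overline{\tfrac{\partial F_j}{\partial z_{k_0}}}(0)=\lambda\,\phi_{\overline{j_0k_0}n}(0),
\]
which only detects the minors with \emph{both} indices $\le n-1$. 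Two-nondegeneracy of $M'$ at $q_0$ only guarantees that \emph{some} $\Delta_{i_1i_2}(0)\ne 0$ with $1\le i_1\le i_2\le n$; if the only non-vanishing minors involve the degenerate direction $i_2=n$ (or $i_1=i_2=n$), your pullback vanishes and $F$ need not be $2$-nondegenerate in the sense of \cite{L1,L2}. Your proposal gives no mechanism to conclude regularity in that situation.

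The paper resolves exactly this by exploiting the constraint $F(U)\subset\widetilde V_1=\{\varphi=0\}$ as an \emph{additional equation} on $F$, not by upgrading the nondegeneracy of the map. In Case II(b) ($\Delta_{jk}(0)=0$ for $j,k\le n-1$ but $\Delta_{j_0n}(0)\ne0$ with $j_0\le n-1$), differentiating $\varphi(F,\overline F)\equiv0$ along $L_{j_0}$ and using $\varphi_{\overline w_{j_0}}(0)\ne0$ produces a contradiction, so that configuration cannot occur. In Case II(c) (only $\Delta_{nn}(0)\ne0$), one shows $\varphi_{\overline w_n}(0)\ne0$ and appends $\widetilde\varphi(F,\overline F)=0$ to the system $\rho(F,\overline F)=0$, $L_i\rho(F,\overline F)=0$; the resulting system is nondegenerate because $\widetilde\varphi_{w_n}(0)\ne0$ supplies precisely the missing $w_n$-direction, and regularity follows from the implicit-function/reflection argument. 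This trichotomy on which minors survive, and the use of the degeneracy variety's defining function as an extra equation, is the missing idea in your proposal.
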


\begin{proof}
We write $\Omega$ as the open subset of $M$ where $F$ is real-analytic (resp. smooth).
Fix any $p_{0} \in M.$ Write $q_{0}=F(p_{0}) \in M'.$ We will need to prove $p_0 \in \overline{\Omega}.$ We assume $p_{0}=0 \in M,  q_{0}=0 \in M'.$ By assumption,  $M'$ is either $1$-nondegenerate or $2$-nondegenerate at $q_{0}.$ We then split our argument in two cases.

{\bf Case I:}  $M'$ is $1$-nondegenerate at $q_{0}.$ That is,  $M'$ is Levi-nondegenerate near $q_{0}.$ Then it follows from Corollary 2.3 in [BX2] that $p_{0} \in \overline{\Omega}.$

{\bf Case II:} $M'$ is $2$-nondegenerate at $q_{0}.$ Let $O$ be a small neighborhood of $q_{0}$ in $\mathbb{C}^{n+1}.$ Let $V=O \cap M'.$  We write $V_{1}$ as the set of $1$-degeneracy of $M'$ in $V.$ More precisely, 
$$V_{1}=\{q \in V: M'~\text{is}~1\text{-degenerate at}~q \}.$$

 If there is a sequence $\{p_{i}\}_{i=1}^{\infty} \subset M$ converging to $p_{0}$ such that $M'$ is $1-$nondegenerate at each $F(p_{i})$,   i.e.,  $F(p_{i}) \in M \setminus V_{1},   i \geq 1.$ Then by Case I,  we have each $p_{i} \in \overline{\Omega},  i \geq 1.$ Consequently,  $p_{0} \in \overline{\Omega}.$ Thus we are only left with the case
 that there exists a neighborhood $U$ of $p$ on $M$ such that $F(U) \subset V_{1}.$  We apply then normalization to $M,  M'$ and the map $F$ as in \autoref{rknormalization}. Since $M'$ is $2$-nondegenerate at $0$,   we conclude by \autoref{lemma51},  $\Delta_{j_0k_0}(0)=c \neq 0$,   for some $1 \leq j_{0} \leq k_{0} \leq n.$ We then further split into the following subcases.

{\bf Case II(a)}: There exist some $1 \leq j_{0} \leq k_{0} \leq n-1$,   such that, 
$\Delta_{j_0k_0}(0)=c \neq 0.$  Consequently, we have $\phi_{\overline{j_0k_0}n}(0) \neq 0.$ Then similarly as in the proof of \autoref{T0}, 
we can show that $F$ is finitely nondegenerate.  Hence again by the results of \cite{L1, L2},  $F$
is real-analytic (resp. smooth) at $0.$

{\bf Case II(b)}: For any $1 \leq j \leq k \leq n-1$,   $\Delta_{jk}(0)=0.$ Moreover,  there exists $1 \leq j_{0} \leq n-1$ such that,  $\Delta_{j_0n}(0)=c \neq 0.$ Then by a similar argument as in Remark \ref{remark51}, we conclude $\phi_{\overline{jk}n}(0)=0,$ for any $1 \leq j \leq k \leq n-1,$ and $\phi_{\overline{j_0n}n}(0) \neq 0.$

Note that $V_{1} \subset \widetilde{V}_{1}\bigcap M'$,   where $\widetilde{V}_{1}$ is defined
\begin{equation}\label{eqnv51}
\widetilde{V}_{1}:=\{W \in O: \varphi(W,  \overline{W})=0 \}, 
\end{equation}
with
\begin{equation}\label{eqnphi52}
\varphi(W, \overline{W})={\rm det}\left(
                                   \begin{array}{c}
                                     \rho_{W} \\
                                     \Lambda_{1}\rho_{W} \\
                                     ... \\
                                     \Lambda_{n-1}\rho_{W} \\
                                     \Lambda_{n}\rho_{W} \\
                                   \end{array}
                                 \right)(W,  \overline{W}).
\end{equation}

Then we have

\begin{claim}\label{lmwj0}
The $\overline{w}_{j_{0}}$-derivative of $\varphi$ is nonzero at $q_{0}=0.$
\end{claim}
\begin{proof}
It is equivalent to show that $\Lambda_{j_{0}}\varphi(0) \neq 0.$ That is, 
\begin{equation}
\Lambda_{j_{0}}\det\left(
                                   \begin{array}{c}
                                     \rho_{W} \\
                                     \Lambda_{1}\rho_{W} \\
                                     ... \\
                                     \Lambda_{n-1}\rho_{W} \\
                                     \Lambda_{n}\rho_{W} \\
                                   \end{array}
                                 \right)(0) \neq 0.
\end{equation}

Note that
$$\Lambda_{j_{0}}\det\left(
                                   \begin{array}{c}
                                     \rho_{W} \\
                                     \Lambda_{1}\rho_{W} \\
                                     ... \\
                                     \Lambda_{n-1}\rho_{W} \\
                                     \Lambda_{n}\rho_{W} \\
                                   \end{array}
                                 \right)(0)=$$

$$\det\left(
                                   \begin{array}{c}
                                     \Lambda_{j_{0}}\rho_{W} \\
                                     \Lambda_{1}\rho_{W} \\
                                     ... \\
                                     \Lambda_{n-1}\rho_{W} \\
                                     \Lambda_{n}\rho_{W} \\
                                   \end{array}
                                 \right)(0)+ \det\left(
                                   \begin{array}{c}
                                     \rho_{W} \\
                                     \Lambda_{j_{0}}\Lambda_{1}\rho_{W} \\
                                     ... \\
                                     \Lambda_{n-1}\rho_{W} \\
                                     \Lambda_{n}\rho_{W} \\
                                   \end{array}
                                 \right)(0)+...+\det\left(
                                   \begin{array}{c}
                                     \rho_{W} \\
                                     \Lambda_{1}\rho_{W} \\
                                     ... \\
                                     \Lambda_{j_{0}}\Lambda_{n-1}\rho_{W} \\
                                     \Lambda_{n}\rho_{W} \\
                                   \end{array}
                                 \right)(0)+\det\left(
                                   \begin{array}{c}
                                     \rho_{W} \\
                                     \Lambda_{1}\rho_{W} \\
                                     ... \\
                                     \Lambda_{n-1}\rho_{W} \\
                                     \Lambda_{j_{0}}\Lambda_{n}\rho_{W} \\
                                   \end{array}
                                 \right)(0).$$

In the above equation,  the first term is trivially zero. Then we note that in the row
vector $\rho_{W}(0)$,   or $\Lambda_{i}\rho_{W}(0),  1 \leq i \leq n$,   the $n^{\rm th}$ component
is zero. This is due to the fact that $\phi=O(|W|^3).$ 
Moreover,  the $n^{\rm th}$ component in the row vector $\Lambda_{j_{0}}\Lambda_{k}\rho_{W}(0),  1 \leq k \leq n-1$,   is $\phi_{\overline{j_0k}n}(0),$ which is zero by the assumption. Consequently,  the second term up to 
the $n^{\rm th}$ term in the above equation are all zero. 
We also note the last term in the equation above is just equal to $\Delta_{j_0n}(0),$ which is nonzero.
Hence  the lemma is established.
\end{proof}

Recall that $F(U) \subset V_{1} \subset \widetilde{V}_{1}.$ We have
\begin{equation}
\varphi(F(Z), \overline{F(Z)}) \equiv 0, ~\text{for all}~Z \in U \subset M.
\end{equation}
Applying $L_{j_{0}}$ to the above equation and evaluating at $Z=0$, we have, 
\begin{equation}\label{eqnlphifzero}
L_{j_{0}}\varphi(F, \overline{F})|_{0}=\sum_{i=1}^{n+1}\varphi_{\overline{w}_{i}}
(F, \overline{F})|_{0}L_{j_{0}}\overline{F}_{i}|_{0}=0.
\end{equation}

Note that by our normalization,   $L_{j_{0}}\overline{F}_{i}(0)=0$,   if $i \neq j_{0}.~ L_{j_{0}}\overline{F}_{j_{0}}(0) \neq 0.$ Moreover,  by \autoref{lmwj0},  $\varphi_{\overline{w}_{j_{0}}}(0) \neq 0.$ This is a contradiction to (\ref{eqnlphifzero}).
Hence {\bf Case II(b)} cannot happen in this setting.

\bigskip

{\bf Case II(c)}: $\Delta_{jk}(0)=\Delta_{jn}(0)=0$,   for all $1 \leq j, k \leq n-1$,   and $\Delta_{nn}(0) \neq 0.$

We let $\widetilde{V}_{1}$ be defined by $\varphi$ as above in (\ref{eqnv51}),  (\ref{eqnphi52}).
\begin{claim} \label{lmwnqn}
In the setting of this subcase,  the $\bar{w}_{n}$-derivative of $\varphi$ is nonzero at $q_{0}=0.$
\end{claim}
\begin{proof}
Similar as Lemma \autoref{lmwj0}.
\end{proof}

By Lemma \autoref{lmwnqn},  we have $\varphi_{\overline{w}_{n}}(0) \neq 0.$ Consequently,  if
we define $\widetilde{\varphi}(W, \overline{W})=\overline{\varphi(W, \overline{W})}$,   then
\begin{equation}\label{eqnphi58}
\widetilde{\varphi}_{w_{n}}(0) \neq 0.
\end{equation}

Note $F(U) \subset V_{1} \subset \widetilde{V}_{1}.$ We have
$$\varphi(F(Z), \overline{F(Z)}) \equiv 0, ~\text{for all}~Z \in U.$$
Consequently, 
\begin{equation}\label{eqnphif}
\widetilde{\varphi}(F(Z),  \overline{F(Z)}) \equiv 0.
\end{equation}
Recall that for all $Z \in U$,  
\begin{equation}\label{eqnrhof}
\rho(F, \overline{F})=0, 
\end{equation}

\begin{equation}\label{eqnlrhof}
L_{i}\rho(F, \overline{F})=0,  1 \leq i \leq n-1.
\end{equation}

Combining (\ref{eqnli}),  (\ref{eqnrw}),  (\ref{eqnphi58}),  we conclude that
\begin{equation}\label{eqnnondege}
\det\left(
  \begin{array}{c}
    \rho_{W}(F,  \overline{F}) \\
    L_{1}\rho_{W}(F,  \overline{F}) \\
    ... \\
    L_{n-1}\rho_{W}(F,  \overline{F}) \\
    \widetilde{\varphi}_{W}(F, \overline{F}) \\
  \end{array}
\right) (0) \neq 0.
\end{equation}
This implies that equations (\ref{eqnphif}),  (\ref{eqnrhof}), (\ref{eqnlrhof}) form a nondegenerate system for $F.$ Then it follows that $F$ is real-analytic (resp. smooth) at $0$ by a similar argument as in \cite{L1, L2} or \cite{BX1, BX2}. For the convenience of the readers,  we sketch a proof here for the real-analytic category. The proof for 
the smooth category is similar (we remark on the differences). However, 
those readers unfamiliar with the strategy, we refer to the aforementioned papers for the 
necessary details. 

  We assume that $M$ is defined near 
$0$ by $\{(z, z_n)=(z, s+it) \in U_0 \times V: t=\phi(z,  \overline{z},  s) \}$,   where $\phi$ is a real-valued,  real-analytic function
with $\phi(0)=0,  d\phi(0)=0.$ Here  $U_0 \subset \mathbb{C}^{n-1}$ and $V \subset \mathbb{R}$ are sufficiently small open subsets. In the local coordinates $(z, s) \in \mathbb{C}^{n-1} \times \mathbb{R}$,   we may assume 
that, 
$$L_j=\frac{\partial}{\partial \overline{z}_j}-i\frac{\phi_{\overline{z}_j}(z,  \overline{z},  s)}{1+i\phi_{s}(z,  \overline{z},  s)}\frac{\partial}{\partial s},  1 \leq j \leq n-1.$$
Since $\phi$ is real-analytic,  we can complexify 
(resp. extend almost holomorphically, in the smooth case) in the $s$ variable and write

$$M_j=\frac{\partial}{\partial \overline{z}_j}-i\frac{\phi_{\overline{z}_j}(z,  \overline{z},  s+it)}{1+i\phi_{s}(z,  \overline{z},  s+it)}\frac{\partial}{\partial s},  1 \leq j \leq n-1,$$ 
which are holomorphic in $s+it$ and extend the vector fields $L_j.$

Since $\phi$ and $L_j$ are real-analytic now,  equations (\ref{eqnphif}),  (\ref{eqnrhof}),  (\ref{eqnlrhof}) imply that there is real-analytic map $\Phi(W,  \overline{W},  \Theta)$ defined in a neighborhood of $\{ 0 \} \times \mathbb{C}^q$ in $\mathbb{C}^{n+1} \times \mathbb{C}^q$,   polynomial in the last $q$ variables for some integer $q$ such that 
$$\Phi(F,  \overline{F},  (L^{\alpha} \overline{F})_{|\alpha|=1})=0$$
at  $(z, s) \in U_0 \times V.$ By (\ref{eqnnondege}) the matrix 
$\Phi_{W}$ is invertible at the central point $0$,   by the holomorphic version
of the implicit function theorem. (In the smooth category,  we apply the ``almost holomorphic" version of the implicit function theorem,  cf. \cite{L1}).  We get a 
holomorphic map $\Psi=(\Psi_1, ..., \Psi_{n+1})$ such that for $(z, s)$ near the origin, 
$$F_j=\Psi_j(\overline{F},  (L^{\alpha}\overline{F})_{|\alpha|=1}),  1 \leq j \leq n+1.$$

We now set for each $1 \leq j \leq n+1$,  
$$h_j(z, s, t)=\Psi_{j}\left(\overline{F}(z, s, -t),  (M^{\alpha} \overline{F})_{|\alpha|=1}(z, s, -t)\right).$$
Since $M$ is strongly pseudoconvex,  the CR functions $F_j,  1 \leq j \leq n+1$,   all extend as holomorphic functions in $s+it$ to the side $t >0.$
Hence the conjugates $\overline{F}_j,  1 \leq j \leq n+1$,   extend holomorphically to the side $t <0.$ It now follows that $F_j,  1 \leq j \leq n+1$,   extend as holomorphic functions to a full neighborhood of the origin
(See Lemma 9.2.9 in \cite{BER}). (In the smooth category, we apply the 
edge-of-the-wedge theorem, as in \cite{L1}). This establishes Proposition \ref{thml2}.

\end{proof}

We then prove \autoref{T1}.

{\bf Proof of \autoref{T1}:} We again write $\Omega$ for the open subset of $M$ where $F$ is smooth.
Fix any $p_{0} \in M$ and let $q_{0}=F(p_{0}) \in M'$. 
We need to show that $p_{0} \in \overline{\Omega}$ to establish the theorem.  Assume that $p_{0}=0,  q_{0}=0.$
By assumption,  $M'$ is $\ell$-nondegenerate at $q_0$ for some $\ell \geq 1.$ We note that if $1 \leq \ell \leq 2$,   it follows from \autoref{thml2} that $p_{0} \in \overline{\Omega}$. 
We will prove $p_0 \in \overline{\Omega}$ for the general case $\ell \geq 3$  by induction on the order of nondegeneracy $\ell$.

Suppose the statement $p_0 \in \overline{\Omega}$ holds when $\ell\leq k$ for some $k \geq 2.$ We now consider the case $\ell=k+1,$ that is, we assume $M'$ is $(k+1)-$nondegenerate at $q_0.$  Note that if there is a sequence $\{p_{i}\}_{i=1}^{\infty} \subset M$ converging to $p_{0}$ such that $M'$ is at most $k$-nondegenerate at each $F(p_{i})$,  then it follows from the inductive hypothesis that each $p_i \in \overline{\Omega}.$ Consequently, $p_0 \in \overline{\Omega}$ and hence the statement holds. Thus we are only left with the case that there exists a neighborhood
$U$ of $p_{0}$ on $M$ such that $F(U) \subset V_{k}$. Here $V_{k}$ is the set of $k$-degeneracy
of $M'$ near $q_{0}.$ More precisely, 
$$V_{k}=\{q \in V: M'~\text{is}~k\text{-degenerate at}~q \},$$ 
for some small neighborhood  $V=O \cap M'$ of $q_{0}.$ Here $O$ is a small neighborhood of $q_{0}$ in $\mathbb{C}^{n+1}.$ Since $M'$ is $(k+1)$-nondegenerate at $q_0=0$, 
by Lemma \ref{lemma51},  $\Delta_{i_0 i_1 \cdots i_k}(0) \neq 0$, for some
$1 \leq i_{0}\ \leq i_1  \cdots \leq i_{k} \leq n.$  We define 

\begin{equation}\label{eqnphikeqdil}
\varphi_{k}(W, \overline{W})=\det\left(
\begin{array}{c}
\rho_{W} \\
\Lambda_{1}\rho_{W} \\
... \\
\Lambda_{n-1}\rho_{W} \\
\Lambda_{i_1} \cdots \Lambda_{i_k}\rho_{W} \\
\end{array}
\right)(W, \overline{W}).
\end{equation}
Set $$\widetilde{V}_{k}=\{W \in O: \varphi_{k}(W,  \overline{W})=0 \}.$$
We split our argument into two cases.

{\bf Case I:} We first suppose that  $i_{0} \leq n-1.$ Note that $F(U) \subset V_{k} \subset \widetilde{V}_{k}$.
We have

\begin{claim}\label{lmwi0}
The $\overline{w}_{i_{0}}$-derivative $(\varphi_k)_{\overline{w}_{i_0}}$ of $\varphi_{k}$ is nonzero at $q_{0}=0.$
\end{claim}

\begin{proof}
It is equivalent to show that $\Lambda_{i_{0}}\varphi_{k}(0) \neq 0.$ That is, 
\begin{equation}
\Lambda_{i_{0}}\det\left(
                                   \begin{array}{c}
                                     \rho_{W} \\
                                     \Lambda_{1}\rho_{W} \\
                                     ... \\
                                     \Lambda_{n-1}\rho_{W} \\
                                     \Lambda_{i_1} \cdots \Lambda_{i_k}\rho_{W} \\
                                   \end{array}
                                 \right)(0) \neq 0
\end{equation}
Note that
$$\Lambda_{i_{0}}\det\left(
                                   \begin{array}{c}
                                     \rho_{W} \\
                                     \Lambda_{1}\rho_{W} \\
                                     ... \\
                                     \Lambda_{n-1}\rho_{W} \\
                                     \Lambda_{i_1} \cdots \Lambda_{i_k}\rho_{W} \\
                                   \end{array}
                                 \right)(0)=$$
$$
\det\left(
                                   \begin{array}{c}
                                     \Lambda_{i_{0}}\rho_{W} \\
                                     \Lambda_{1}\rho_{W} \\
                                     ... \\
                                     \Lambda_{n-1}\rho_{W} \\
                                     \Lambda_{i_1} \cdots \Lambda_{i_k}\rho_{W} \\
                                   \end{array}
                                 \right)(0)+\det\left(
                                   \begin{array}{c}
                                     \rho_{W} \\
                                     \Lambda_{i_{0}}\Lambda_{1}\rho_{W} \\
                                     ... \\
                                     \Lambda_{n-1}\rho_{W} \\
                                     \Lambda_{i_1} \cdots \Lambda_{i_k}\rho_{W} \\
                                   \end{array}
                                 \right)(0)+ \cdots + \det\left(
                                   \begin{array}{c}
                                     \rho_{W} \\
                                     \Lambda_{1}\rho_{W} \\
                                     ... \\
                                     \Lambda_{i_{0}}\Lambda_{n-1}\rho_{W} \\
                                     \Lambda_{i_1} \cdots \Lambda_{i_k}\rho_{W} \\
                                   \end{array}
                                 \right)(0)+\det\left(
                                   \begin{array}{c}
                                     \rho_{W} \\
                                     \Lambda_{1}\rho_{W} \\
                                     ... \\
                                     \Lambda_{n-1}\rho_{W} \\
                                     \Lambda_{i_{0}}\Lambda_{i_1} \cdots \Lambda_{i_k}\rho_{W} \\
                                   \end{array}
                                 \right)(0)
$$                                                                  

We claim that the first term up to the $n^{\rm th}$ term above are all zero. Indeed,  otherwise,  $M'$ is at most $k-$nondegenerate at $0.$ This is a contradiction to our assumption.

We finally note the last term in the above equation just equals to $\Delta_{i_0i_1 \cdots i_k}(0),$ which is nonzero. This establishes the lemma.
\end{proof}

Recall $F(U) \subset V_{k} \subset \widetilde{V}_{k}.$ We have
\begin{equation}
\varphi_{k}(F(Z), \overline{F(Z)}) \equiv 0, ~\text{for all}~Z \in U \subset M.
\end{equation}
Applying $L_{i_{0}}$ to the above equation and evaluating at $Z=0$, we have, 
\begin{equation}\label{eqnlvarphi2f}
L_{i_{0}}\varphi_{k}(F, \overline{F})|_{0}=\sum_{i=1}^{n+1}(\varphi_{k})_{\overline{w}_{i}}
(F, \overline{F})|_{0}L_{i_{0}}\overline{F}_{i}|_{0}=0.
\end{equation}

Note that by our normalization,   $L_{i_{0}}\overline{F}_{i}(0)=0$,   if $i \neq i_{0}$ and $ L_{i_{0}}\overline{F}_{i_{0}}(0) \neq 0.$ Moreover,  by \autoref{lmwi0},  $(\varphi_k)_{\overline{w}_{i_{0}}}(0) \neq 0.$ This is a contradiction to (\ref{eqnlvarphi2f}). Hence {\bf Case I} cannot happen in this setting.

\bigskip

{\bf Case II:} We are thus only left with the case if $i_0=i_1= \cdots =i_k=n.$  Let $\varphi_k$ be as in (\ref{eqnphikeqdil}) with $i_1= \cdots= i_k=n.$ Again let
$\widetilde{V}_{k}=\{W \in O: \varphi_{k}(W, \overline{W})=0 \}.$

By a similar argument as in the proof of \autoref{lmwi0},  we are able to prove the following lemma.
\begin{claim}\label{lemmawn56}
The $\overline{w}_{n}$-derivative $(\varphi_k)_{\overline{w}_n}$ of $\varphi_{k}$ is nonzero at $0.$
\end{claim}

As a consequence of Lemma \autoref{lemmawn56},  if we define $\widetilde{\varphi}_{k}(W, \overline{W})=\overline{\phi_{k}(W, \overline{W})}$, then 
\begin{equation}\label{eqnphi2wn}
(\widetilde{\varphi}_{k})_{w_{n}}(0)\neq 0.
\end{equation}

Note $F(U) \subset V_{k} \subset \widetilde{V}_{k}.$ We have
$$\varphi_{k}(F(Z), \overline{F(Z)}) \equiv 0, ~\text{for all}~Z \in U.$$
Consequently, 
\begin{equation}\label{eqnphif1}
\widetilde{\varphi}_{k}(F(Z),  \overline{F(Z)}) \equiv 0.
\end{equation}
Recall that for all $Z \in U$,  
\begin{equation}\label{eqnrhof1}
\rho(F, \overline{F})=0, 
\end{equation}

\begin{equation}\label{eqnlrhof1}
L_{i}\rho(F, \overline{F})=0,  1 \leq i \leq n-1.
\end{equation}

Note that
\begin{equation}
\det\left(
  \begin{array}{c}
    \rho_{W}(F,  \overline{F}) \\
    L_{1}\rho_{W}(F,  \overline{F}) \\
    ... \\
    L_{n-1}\rho_{W}(F,  \overline{F}) \\
    (\widetilde{\varphi}_{k})_{W}(F, \overline{F}) \\
  \end{array}
\right) (0) \neq 0
\end{equation}
by equations (\ref{eqnli}), (\ref{eqnrw}), (\ref{eqnphi2wn}).  This implies that equations (\ref{eqnphif1}),  (\ref{eqnrhof1}), (\ref{eqnlrhof1}) forms a nondegenerate system for $F.$ Then by a similar argument as in the proof of \autoref{thml2},  it follows that $F$ is smooth at $0.$ This proves $p_0 \in \overline{\Omega}$ when $\ell=k+1.$ Hence the statement holds for all $l \geq 1$ by mathematical induction. 
\autoref{T1} is thus established. \qed

\section{Proof of Theorem \ref{T1a}}

We are now going to prove \autoref{T1a}.
Fix $p_{0} \in M$ and let $q_{0}=F(p_{0}) \in M'.$ We
will show below that we can apply Theorem 2 for $q_0 \in M' \setminus X$
for some  complex variety $X$ in $\C^{n+1}$. We note that the transversality
of $F$ implies that  the set $F^{-1} (M'\setminus X)$ is open and dense in $M$,  and
the statement of \autoref{T1a} follows. Indeed, to prove that last 
observation, suppose on 
the contrary that for a neighbourhood $U$ 
of $p\in M$ we have that  $F(M\cap U) \subset X$. Then 
$dF (\C T_{p} M)  \subset \C T_{F(p)} X \subset T^{(1,0)}_{F(p)} M' + T^{(0,1)}_{F(p)} M' $,
and so $F$ is not transversal at $p$. 

The following theorem gives the missing claim in the above argument.  Let $V$ be a small neighborhood of $q_{0}$ in $\mathbb{C}^{n+1}.$ We first need to show that

\begin{theorem}\label{thm:lnondegvariety}
$M'$ is finitely  nondegenerate near $q_{0}$  away from a complex analytic variety $X$ in $V.$
\end{theorem}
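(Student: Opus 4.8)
The plan is to prove \autoref{thm:lnondegvariety} by exhibiting an explicit complex analytic variety $X$ off of which $M'$ is finitely nondegenerate, built directly from the defining function $\rho$ of $M'$. Recall that $M'$ is holomorphically nondegenerate (near $q_0$, after the normalization of \autoref{rknormalization} it has the form (\ref{eqnrho})), hence by Proposition 2.4 there is an integer $l = l(M')$ with $1 \le l \le n$ such that $M'$ is $l$-nondegenerate on a dense open subset. The key point is that the locus of $k$-degeneracy is \emph{not} obviously a complex variety — it is cut out by the vanishing of the determinants $\Delta_{i_1\dots i_k}(q)$ (and their analogues over all the minors of the Jacobian-type matrices), which are real-analytic, not holomorphic, functions on $M'$. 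The idea to get around this is standard in this circle of ideas (cf. \cite{BER}): express the conditions using the \emph{complexification} of $M'$.

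Concretely, first I would pass to the complexification $\mathcal{M}' = \{(W,\chi) \in V \times V^* : \rho(W,\chi) = 0\}$, where $\rho(W,\chi)$ is the holomorphic function in $2(n+1)$ variables obtained by polarizing the real-analytic defining function (this uses crucially that $M'$ is real-analytic). The CR vector fields $\Lambda_k$ lift to holomorphic vector fields on $\mathcal{M}'$ tangent to the fibers of the projection $(W,\chi) \mapsto W$. Then the spans $E_l(q)$ appearing in Definition 2.1 are the spans of $\{\Lambda^\alpha \rho_W(W,\chi) : |\alpha| \le l\}$ evaluated on the fiber over $W$; for $q=(W,\bar W) \in M'$ this recovers the original $E_l(q)$. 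Define, for each $l$, the holomorphic function $\Phi_l$ on $\mathcal{M}'$ equal to the sum of squares (or just the collection) of all maximal minors of the matrix whose rows are $\Lambda^\alpha\rho_W$, $|\alpha|\le l$. Since $M'$ is holomorphically nondegenerate, for $l = l(M')$ the function $\Phi_l$ is not identically zero on (the relevant component through $(q_0,\bar q_0)$ of) $\mathcal{M}'$. Its zero set is a complex analytic subvariety $Z_l \subsetneq \mathcal{M}'$.

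Next I would push this variety down to $\cx^{n+1}$. Let $\pi\colon \mathcal M'\to \cx^{n+1}$, $\pi(W,\chi)=W$, be the first projection, which is a submersion onto a neighborhood of $q_0$. The issue is that $\{W : (W,\bar W)\in Z_l\}$ need not be complex analytic, so instead I set $X := \{W \in V : \Phi_l(W,\chi) = 0 \text{ for all } \chi \text{ with } (W,\chi)\in\mathcal{M}'\}$, i.e. the set of $W$ over which the \emph{entire} fiber of $\pi$ lies in $Z_l$. Equivalently, parametrizing the fiber holomorphically, $X$ is the common zero set of all Taylor coefficients (in the fiber variable) of $\Phi_l$ restricted to $\mathcal M'$; each such coefficient is holomorphic in $W$, so $X$ is a complex analytic variety in $V$. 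Because $\Phi_l \not\equiv 0$ on the component of $\mathcal M'$ through $(q_0,\bar q_0)$, and $\mathcal M'$ is irreducible there as the complexification of a connected real-analytic hypersurface, $\Phi_l$ does not vanish on a full fiber generically, so $X$ is a \emph{proper} subvariety. Finally, for $q = (W_0,\overline{W_0}) \in M'$ with $W_0 \notin X$, there is some $\chi$ in the fiber over $W_0$ with $\Phi_l(W_0,\chi)\neq 0$; by the reality of $\rho$, taking $\chi = \overline{W_0}$ and using that $\Phi_l(W_0,\overline{W_0})$ is (up to conjugation/constants) the original $\Delta$-type determinant, one deduces $E_l(q) = \cx^{n+1}$, hence $M'$ is $k$-nondegenerate at $q$ for some $k \le l$.

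The main obstacle I anticipate is the bookkeeping in the last step: one must make sure that "some $\chi$ in the fiber works" genuinely transfers back to the anti-holomorphic diagonal $\chi=\overline{W_0}$ to give nondegeneracy of the \emph{real} hypersurface at the \emph{real} point $q$ — this is where holomorphic nondegeneracy (as opposed to merely the complexified statement) is essential, and one should invoke the equivalences in Proposition 2.4 rather than reprove them. A secondary technical point is checking irreducibility/connectedness of the germ of $\mathcal M'$ at $(q_0,\bar q_0)$ so that "$\Phi_l \not\equiv 0$" really forces $X$ to be a proper subvariety and not all of $V$; shrinking $V$ and using that $M'$ is connected real-analytic handles this. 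Everything else — the lifting of $\Lambda_k$, the holomorphy of the fiberwise Taylor coefficients, the identification of $\Phi_l$ on the diagonal with $\Delta_{i_1\dots i_l}$ — is routine once the complexification framework is set up.
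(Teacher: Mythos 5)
Your general framework---complexify $M'$, lift the CR vector fields to the fibers of $(W,\chi)\mapsto W$, and define $X$ by vanishing on the \emph{whole} fiber so that it becomes complex analytic in $W$---is essentially the one the paper uses, but the step you dismiss as ``bookkeeping in the last step'' is in fact the entire content of the theorem, and neither of your proposed fixes closes it. Degeneracy of $M'$ at $q=(W_0,\overline{W_0})$ is the vanishing of the minors at the single point $\chi=\overline{W_0}$ of the fiber, whereas membership in your $X$ concerns vanishing on the whole fiber. The implication you need, namely that $W_0\notin X$ (i.e.\ $\Phi_l(W_0,\chi)\neq 0$ for \emph{some} $\chi$ on the fiber) forces $\Phi_l(W_0,\overline{W_0})\neq 0$, is simply false for a holomorphic function on the fiber; and Proposition 2.4 cannot rescue it, since that proposition only produces a \emph{real-analytic} exceptional set---exactly the thing one is trying to upgrade to a complex-analytic one---so invoking it here is circular.

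The missing idea is that one must work with the minors of \emph{all} orders simultaneously and exploit that the ideal they generate is stable under the CR vector fields: applying $\Lambda_j$ to an order-$k$ minor yields, by the product rule on the rows, a combination of order-$(k+1)$ minors (this is the relation $L I_k\subset I_{k+1}$ in \autoref{lem:nondegvariety2}). Consequently, if $W_0$ lies in the finite-degeneracy locus, then $\Phi_l$ \emph{and all of its $\chi$-derivatives along the fiber} are combinations of (higher-order) minors evaluated at $\chi=\overline{W_0}$ and hence vanish there; so $\Phi_l(W_0,\cdot)$ vanishes to infinite order at $\overline{W_0}$, therefore identically on the fiber, and this is what places $W_0$ in $X$. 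That infinite-order-vanishing mechanism is precisely \autoref{pro:stableideals} (zero sets of $\bar\partial_b$-closed ideals are traces of complex varieties) combined with \autoref{lem:nondegvariety2}. With this ingredient supplied, the rest of your construction---the holomorphy of the fiberwise Taylor coefficients defining $X$, and the properness of $X$ via the generic $l$-nondegeneracy of a holomorphically nondegenerate $M'$---does go through as you describe.
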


In order to do so,  we shall first state and prove a useful
general fact. For this,  let $M\subset\CC{N}$ be  a generic
real-analytic submanifold of CR dimension $n$ and real
codimension $d$ (i.e. $N = n+d$). We denote
the set of germs at
$p\in M$ of real-analytic functions on $M$ with $\CC{} \{ M \}_p$.
We say that an
ideal $I\subset \CC{} \{ M \}_p $ is $\bar\partial_b$-closed if
for any CR vector field $L$ on $M$ and any $f\in I$ we
have that $L f \in I$. For any ideal
 $I\subset \CC{} \{ M \}_p $,  we denote by $\mathcal{V} (I)$
  the germ of the
real-analytic subset of $M$ given by the vanishing of all
elements of $I$.

\begin{proposition}
\label{pro:stableideals} Let $I\subset \CC{} \{ M \}_p $ be
a  ideal which is $\bar \partial_b$-closed. Then there
exists a neighborhood $U$ of $p$ in $\CC{N}$ and
a complex subvariety $V\subset U$ such that,  in
the sense of germs at $p$,  $V\cap M = \mathcal{V} (I)$.
\end{proposition}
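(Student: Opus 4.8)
The plan is to pass from the real-analytic picture to the complex-analytic one via complexification, and then exploit $\bar\partial_b$-closedness to show the complexified ideal is essentially independent of the antiholomorphic variables, so that its zero set is (the trace of) a complex subvariety. First I would choose local holomorphic coordinates $Z = (z, w) \in \CC{n}\times\CC{d}$ vanishing at $p$ so that $M$ is given in normal coordinates by $w = Q(z,\bar z, \bar w)$ (equivalently, $\bar w = \bar Q(\bar z, z, w)$), with $Q$ holomorphic near $0$ and satisfying the standard reality/normalization conditions. A germ $f\in\CC{}\{M\}_p$ has a unique representative as (the restriction to $M$ of) a real-analytic function $F(z,\bar z)$; complexifying, $F$ extends to a holomorphic function $F(z,\chi)$ on a neighborhood of $0$ in $\CC{n}_z\times\CC{n}_\chi$. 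The CR vector fields on $M$, in normal coordinates, are spanned by $\overline{L_j} = \partial_{\bar z_j} + (\text{terms})\,\partial_{\bar w}$, and I want to track what the hypothesis $L f\in I$ says after complexification.

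The key step is the following reformulation. Let $\tilde I$ be the ideal in the ring of germs at $0$ of holomorphic functions $F(z,\chi)$ generated by the complexifications of a (finite, by Noetherianity of $\CC{}\{M\}_p$) generating set of $I$. The vector fields $\overline{L_j}$ complexify to commuting holomorphic vector fields $\overline{L_j}^{\,\cx}$ in the $(z,\chi)$-variables which, after the standard change of frame, become simply $\partial_{\chi_j}$ up to a factor (this is the usual fact that in normal coordinates the conjugate CR vector fields straighten out on the complexification). Hence $\bar\partial_b$-closedness of $I$ translates into: $\tilde I$ is stable under $\partial_{\chi_1},\dots,\partial_{\chi_n}$. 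A standard argument — expand a generator $G(z,\chi)$ in powers of $\chi$ and use that all $\chi$-derivatives lie in $\tilde I$ together with the Noetherian/Artin–Rees or a direct Taylor-coefficient bookkeeping — shows that $\tilde I$ is generated by holomorphic functions of $z$ alone, i.e. $\tilde I = \tilde I_0 \cdot \CC{}\{z,\chi\}_0$ where $\tilde I_0\subset\CC{}\{z\}_0$. I would then set $V = \{\, (z,w) : g(z) = 0 \ \text{for all } g\in \tilde I_0\,\}$, a complex subvariety of a neighborhood $U$ of $p$ in $\CC{N}$.

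It remains to check $V\cap M = \mathcal V(I)$ as germs at $p$. The inclusion $V\cap M\subset\mathcal V(I)$ is immediate since every generator of $I$ is, on $M$, the restriction of a function in $\tilde I$, hence of a $\CC{}\{z\}$-combination of the $g$'s. For the reverse inclusion: if a point $Z_0=(z_0,w_0)\in M$ lies in $\mathcal V(I)$, then every generator of $I$ vanishes there, and restricting the complexified generators to the diagonal $\chi=\bar z$ together with the membership in $\tilde I_0\cdot\CC{}\{z,\chi\}$ forces $g(z_0)=0$ for all $g\in\tilde I_0$ (here one uses that the representation of $I$-elements in terms of the $g$'s is genuine on $M$, not just formally). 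The main obstacle I anticipate is precisely this last bookkeeping: making rigorous the claim that $\bar\partial_b$-closedness forces the complexified ideal to have holomorphic-in-$z$ generators, and ensuring the passage between "vanishing on $M$" and "vanishing of the $z$-generators" is correct on the nose (not merely on a formal or infinitesimal level). Noetherianity of $\CC{}\{M\}_p$ is used to reduce to finitely many generators so that one can complexify uniformly; one should also check the construction is independent of the chosen generating set, which follows once $V$ is characterized intrinsically as the smallest complex subvariety whose intersection with $M$ contains $\mathcal V(I)$.
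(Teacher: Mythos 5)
Your proposal is correct in outline and reaches the right conclusion, but it routes through a stronger statement than the paper needs, and the crux of your argument is exactly the step you defer to ``a standard argument.'' Both you and the paper complexify in normal coordinates and observe that, after substituting $\tau=\bar Q(\chi,z,w)$, each germ $f\in\mathbb{C}\{M\}_p$ becomes a holomorphic germ $\varphi_f(z,w,\chi)$ with $\partial_{\chi_j}\varphi_f=\varphi_{L_jf}$, so that $\bar\partial_b$-closedness of $I$ makes the complexified ideal stable under the $\partial_{\chi_j}$. From there the paper argues purely at the level of zero sets: if $Z_0\in\mathcal{V}(I)$ then every $\chi$-derivative of $\varphi_f(Z_0,\cdot)$ vanishes at $\chi=\bar z_0$, so $\varphi_f(Z_0,\cdot)\equiv 0$ and in particular $\varphi_f(Z_0,0)=0$; conversely, the Taylor expansion of $\varphi_f$ at $\chi=0$ expresses $f|_M$ in terms of the functions $\varphi_{L^\alpha f}(\cdot,\cdot,0)$, giving the reverse inclusion. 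The desired complex variety is then simply $\{(z,w)\colon \varphi_f(z,w,0)=0,\ f\in I\}$. You instead assert that a $\partial_\chi$-stable ideal of $\mathbb{C}\{z,w,\chi\}_0$ is extended from $\mathbb{C}\{z,w\}_0$; this is true, but proving it requires genuine commutative algebra (finite generation of the coefficient ideal, Nakayama or Krull's intersection theorem to pull the Taylor coefficients back into the ideal, and faithful flatness of the completion to control convergence of the resulting combinations), none of which is needed for the zero-set statement the proposition actually asks for. If you keep your route you must supply that lemma in full; otherwise the paper's weaker, self-contained argument is preferable. Two smaller points: you have suppressed the transverse variables, so your generators should live in $\mathbb{C}\{z,w\}_0$ rather than $\mathbb{C}\{z\}_0$ (the variety genuinely depends on $w$); and the identification of the complexified ideal with $\{\varphi_f\colon f\in I\}$ --- which your reverse inclusion uses when you insist that each generator is ``genuinely'' the trace of an element of $I$ on $M$ --- holds because every germ in $\mathbb{C}\{z,w,\chi\}_0$ arises as $\varphi_h$ for some real-analytic $h$ on $M$; this deserves to be said explicitly.
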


\begin{proof}
We choose normal coordinates $(z, w) \in \CC{n} \times \CC{d}$
for $M$ at $p$; in these coordinates,  $p=0$ and $M$
is defined
by
\[  w = Q (z, \bar z,  \bar w),  \]
where $Q = (Q^1, \dots, Q^d)$ is a holomorphic map
with values in $\CC{d}$,  defined in a neighborhood
of $(0, 0, 0) \in \CC{n} \times \CC{n} \times \CC{d}$, 
satisfying
\begin{equation}
\label{e:normality} Q(z, 0, \bar w) = Q (0,  \bar z ,  \bar w ) = \bar w,  \quad Q(z, \bar z, \bar Q (\bar z,  z,  w)) = w.
\end{equation}
A basis of the CR vector fields on $M$ near $0$ is given
by
\[  L_j = \frac{\partial}{\partial \bar z_j} + \sum_{k=1}^d
\bar Q^k_{\bar z_j} (\bar z,  z,  w) \frac{\partial}{\partial \bar w_k}.  \]
As usual,  we use multtiindex
 notation and for $\alpha = (\alpha_1,  \dots,  \alpha_n)$
 we  write
  $ L^\alpha =  L_1^{\alpha_1} \cdots  L_n^{\alpha_n}$.

Let $f\in \CC{} \{M\}_p  $. There
exists a holomorphic function $F(z, w, \chi, \tau)$ defined
in a neighborhood of $(0, 0, 0, 0) \in \CC{n} \times \CC{d} \times \CC{n} \times \CC{d}$ such that
$f(z, w, \bar z,  \bar w) = F (z,  w,  \bar z,  \bar Q (\bar z,  z,  w))$ for
$(z, w) \in M$. For any such $f$,  we write $\varphi_f (z, w, \chi) = F (z,  w,  \bar z,  \bar Q (\bar z,  z,  w))$.  We note
that $\frac{\partial^{|\alpha|} \varphi_f}{\partial \chi^\alpha} (z, w, \bar z) = \varphi_{L^\alpha f} (z, w, \bar z)$.
We also note that we
can write
\begin{equation}
\label{e:taylorexp}
\begin{aligned}
F(z, w, \chi,  \bar Q (\chi,  z,  w) ) &= \sum_{\alpha} \frac{1}{\alpha!}
\frac{\partial^{|\alpha|}}{\partial \chi^\alpha}
F(z, w, \chi, \bar Q(\chi, z, w))  \biggr|_{\chi = 0}  \chi^\alpha \\
&=
\sum_\alpha  \frac{1}{\alpha!}
 L^\alpha F (z, w, 0, w)  \chi^\alpha \\
&= \sum_{\alpha} \frac{1}{\alpha!}
\frac{\partial^{|\alpha|} \varphi_f}{\partial \chi^\alpha} (z, w, 0)  \chi^\alpha .
\end{aligned}
\end{equation}

So assume that we have chosen a small neighborhood of $0$,  such
that inside this neighborhood,  $\mathcal{V}(I)$
is defined by an ideal $\tilde I$ of functions $f(z, w, \bar z , \bar w)$ extending holomorphically to a common neighborhood of
 $(0, 0, 0, 0) \in \CC{n} \times \CC{d} \times \CC{n} \times \CC{d}$.
 We claim that $\mathcal{V} (\tilde I)  = \{(z, w) \colon \varphi_f (z, w, 0) = 0,  \,  f\in\tilde I\} $.

 Let $Z_0 = (z_0,  w_0) \in \mathcal{V} (\tilde I)$,  and
 let $f \in \tilde I$. Then the holomorphic function
 $\chi \mapsto \varphi_f (z_0, w_0, \chi)$ vanishes to
 infinite order at $\chi = \bar z_0$; hence also
 $\varphi_f(z_0, w_0, 0) = 0$. Assume now that
 $\varphi_g (z_0, w_0, 0) = 0$ for every  $g\in\tilde I$. Then
 by \eqref{e:taylorexp},  if $f\in\tilde I$ is arbitrary, 
 then $f(z_0,  w_0 ,  \bar z_0,  \bar w_0 ) = 0$. Hence,  $\mathcal{V} (\tilde I)  = \{(z, w) \colon \varphi_f (z, w, 0) = 0,  \,  f\in\tilde I\} $ as
 claimed.
\end{proof}

The proof of Theorem~\ref{thm:lnondegvariety} is a combination
of Proposition~\ref{pro:stableideals} with the following fact.

\begin{lemma}
\label{lem:nondegvariety2} Let $X \subset M$ be the set of
points $p$ in $M$ at which $M$ is not finitely nondegenerate
of any order $k$. Then $X$ can be defined,  near every
point $p\in M$,  by an ideal which is $\bar \partial_b$-closed.
\end{lemma}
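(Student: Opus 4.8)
\textbf{Proof proposal for Lemma~\ref{lem:nondegvariety2}.} The plan is to write down explicitly the local equations cutting out the non-finitely-nondegenerate locus $X$ and to check, using the normal-coordinate formalism of Proposition~\ref{pro:stableideals}, that the ideal they generate is stable under the CR vector fields. First, I would work in normal coordinates $(z,w)\in\CC{n}\times\CC{d}$ at the reference point $p=0$, with $M = \{ w = Q(z,\bar z,\bar w)\}$ and CR vector fields $L_1,\dots,L_n$ as in the proof of Proposition~\ref{pro:stableideals}. Recalling Definition~1.1, finite nondegeneracy of $M$ at a point $q$ of order $\le k$ is the condition $E_k(q) = \CC{N}$, where $E_k(q)$ is spanned by the vectors $L^\alpha \rho_{\mu,Z}(Z,\bar Z)|_{Z=q}$ for $|\alpha|\le k$ and $1\le\mu\le d$. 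Equivalently, $M$ fails to be $k$-nondegenerate at $q$ precisely when all $N\times N$ minors of the (finite) matrix $\mathcal{J}_k(q)$ whose rows are the $L^\alpha\rho_{\mu,Z}(Z,\bar Z)|_{Z=q}$, $|\alpha|\le k$, $1\le\mu\le d$, vanish; and $q\in X$ iff this holds for every $k$. So, for each $k$, let $I_k\subset\CC{}\{M\}_p$ be the ideal generated by all $N\times N$ minors of $\mathcal{J}_k$; then $X = \mathcal{V}\big(\bigcup_k I_k\big)$ near $p$, i.e.\ $X = \mathcal{V}(I)$ with $I = \bigcup_k I_k$ (equivalently the ideal generated by this union). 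By Noetherianity of $\CC{}\{M\}_p$ this union stabilizes, so $I = I_{k_0}$ for some $k_0$ and in particular $I$ is a genuine ideal.

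The key step is then to show $I$ is $\bar\partial_b$-closed, i.e.\ $L_j f\in I$ for every CR vector field $L_j$ and every generator $f$ of $I$. A generator of $I_k$ is a determinant $f = \det(\text{rows } L^{\alpha_1}\rho_{\mu_1,Z},\dots,L^{\alpha_N}\rho_{\mu_N,Z})$ evaluated along $M$, with each $|\alpha_i|\le k$. Differentiating a determinant by $L_j$ gives, by multilinearity, a sum of $N$ determinants, in the $i$-th of which the $i$-th row $L^{\alpha_i}\rho_{\mu_i,Z}$ is replaced by $L_j L^{\alpha_i}\rho_{\mu_i,Z}$. The new row is of the form $L^{\alpha'}\rho_{\mu_i,Z}$ with $|\alpha'| = |\alpha_i|+1 \le k+1$ — here one must be a little careful, since $L_j L^{\alpha_i}$ is not literally $L^{\alpha''}$ in our ordered-monomial notation unless the vector fields commute, but the commutators $[L_j,L_l]$ are again (smooth, in fact real-analytic) combinations of the $L$'s, so by induction on $|\alpha_i|$ each $L_jL^{\alpha_i}\rho_{\mu_i,Z}$ is an $\CC{}\{M\}_p$-linear combination of rows $L^\beta\rho_{\mu_i,Z}$ with $|\beta|\le |\alpha_i|+1$. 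Substituting such a linear combination into a determinant and expanding by linearity in that row produces a sum of determinants, each of which is (up to a function coefficient) one of the generators of $I_{k+1}$ — unless the substituted row $L^\beta\rho_{\mu_i,Z}$ already appears among the other rows, in which case the determinant is zero. Hence $L_j f$ is an $\CC{}\{M\}_p$-linear combination of generators of $I_{k+1}\subset I$, so $L_j f\in I$. This gives $\bar\partial_b$-closedness of each $I_k$ modulo $I_{k+1}$, hence of the stabilized ideal $I = I_{k_0} = I_{k_0+1} = \cdots$.

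I would then conclude: $X$ is, near $p$, the zero set $\mathcal{V}(I)$ of the $\bar\partial_b$-closed ideal $I$, which is exactly the assertion of the Lemma. Combined with Proposition~\ref{pro:stableideals}, this shows $X = V\cap M$ for a complex subvariety $V$ near $p$; patching these local varieties (using that $X$ is the non-finite-nondegeneracy locus, hence intrinsically defined, and that, by Proposition~1.4, $M$ being holomorphically nondegenerate forces $X\subsetneq M$ to be a proper real-analytic — in fact, locally, complex-analytic — subset) yields Theorem~\ref{thm:lnondegvariety}.

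\textbf{Main obstacle.} The only genuinely delicate point is the bookkeeping in the differentiation step: because our multi-index notation $L^\alpha = L_1^{\alpha_1}\cdots L_n^{\alpha_n}$ fixes an order, applying an extra $L_j$ on the left does not immediately return a monomial in the same normal form, so one has to invoke the commutation relations $[L_i,L_j]\in\operatorname{span}_{\CC{}\{M\}_p}\{L_1,\dots,L_n\}$ (valid since $M$ is CR, indeed these brackets are combinations of the $L$'s and $\bar L$'s, and here one uses that the $\bar L$-part does not contribute to the relevant vanishing conditions — or, more cleanly, one simply enlarges the generating set of $I_k$ to include \emph{all} rows $L^{\alpha_1}\cdots L^{\alpha_m}\rho_{\mu,Z}$ arising from arbitrary length-$\le k$ words in the $L_j$, which spans the same $E_k$ and is manifestly closed under one more application of $L_j$ up to increasing $k$). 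Making this ``span the same subspace / stabilize'' argument precise, together with the appeal to the ascending-chain condition to pass from the infinite union $\bigcup_k I_k$ to a single ideal, is where the real (if routine) work lies; everything else is the determinant-expansion identity and an application of Proposition~\ref{pro:stableideals}.
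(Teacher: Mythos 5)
Your proposal is correct and takes essentially the same route as the paper: define $I_k$ as the ideal generated by the $N\times N$ minors of the matrix whose rows are $L^\alpha$ applied to the characteristic data for $|\alpha|\le k$, observe that $L I_k\subset I_{k+1}$, and take the defining ideal to be $I=\bigcup_k I_k$, which is then $\bar\partial_b$-closed. The paper phrases the rows as Lie derivatives of a basis of characteristic forms $\theta_j$ rather than of the gradients $\rho_{\mu,Z}$ and simply asserts $L I_k\subset I_{k+1}$, whereas you spell out the determinant-expansion and the commutator/stabilization bookkeeping -- a harmless (indeed welcome) elaboration of the same argument.
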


\begin{proof}
Let $p\in M$,  and let $Z = (Z_1, \dots , Z_N)$ be coordinates near
$p$. We note that $M$ is $k$-nondegenerate if the
space $E_k (p)$ has dimension $N$,  where
\[ E_0 = \Gamma(M,  T^0 M),  \quad
E_k = E_{k-1} + \{\mathcal{L}_{L} \omega \colon \omega \in E_{k-1},   L \text{ CR} \}.\]
Here $T^0 M = {\rm Re} (T^{(1,0)} M)^\perp \cap (T^{(0,1)} M )^\perp$ denotes the (real) characteristic bundle of $M$ and
$\mathcal{L}$ the Lie derivative (of forms). It turns out
that $E_k \subset \Gamma(M, T'M)$,  where $T'M =(T^{(0,1)} M)^\perp $ is the bundle
of holomorphic forms on $M$. We have that
\[ T'M = \langle dZ_1,  \dots , dZ_N \rangle. \]
We note that for
\[ \omega = \sum_{j=1}^N \omega^j dZ_j,  \]
it holds that
\[ \mathcal{L}_{L} \omega = \sum_{j=1}^N (L\omega^j) dZ_j .\]
Choose a basis of characteristic forms
 $\theta_j= \sum_{k=1}^N  \theta_j^k d Z_k $,  where $j = 1, \dots,  d$.
 The space $E_k$ is therefore
 spanned by forms of the form
 \[ \mathcal{L}_{L^\alpha} \theta_j =
 \mathcal{L}_{L_1}^{\alpha_1} \cdots
 \mathcal{L}_{L_n}^{\alpha_n} \theta_j
 = \sum_{k=1}^N  ({L}^\alpha \theta_j^k )d Z_k , \quad j=1, \dots,  d ,  \quad |\alpha|\leq k. \]

 We therefore have that $M$ is not $\ell$-nondegenerate for
 some $\ell\leq k_0$ at $p$
 if and only if for every choice $r = (r_1 , \dots,  r_N)$ of
 integers $r_k \in \{1, \dots , d \} $ and
 for every choice of
  multiindeces $A = (\alpha^1,  \dots , \alpha^N)   $, 
  where $ \alpha^j = (\alpha^j_1,  \dots ,  \alpha^j_N) $ satisfies
  $|A| = \max\{ |\alpha^j| \colon j=1, \dots, N \} \leq k_0$, 
  the determinant
  \[ D(A, r) = \det
  \begin{pmatrix}
   	{L}^{\alpha^1} \theta_{r_1}^1 &
   	\dots & {L}^{\alpha^1} \theta_{r_1}^N \\
   	\vdots & & \vdots \\
   	{L}^{\alpha^N} \theta_{r_N}^1 &
   	\dots & {L}^{\alpha^N} \theta_{r_N}^N
   \end{pmatrix} \]
vanishes at $p$; that is,  if we denote by $X_{k_0}$
the set of all points where $p$ is not $\ell$-nondegenerate
where $\ell \leq k_0$,  then $X_{k_0}$ is defined
by the ideal
\[ I_{k_0} = \left(\{ D(A, r) \colon |A| \leq k_0 \}\right). \]
Note that $L I_k \subset I_{k+1}$.
The set $X = \cap_k X_k$ is now defined by
$I = \cup_k I_k $,  which is $\bar \partial_b$-closed.
\end{proof}

By combining \autoref{pro:stableideals} and \autoref{lem:nondegvariety2},  we obtain the result in \autoref{thm:lnondegvariety}.
Now the proof of \autoref{T1a} follows by combining \autoref{thm:lnondegvariety} and the argument in the beginning of the section.

\section{Proof of Theorem~\ref{T2}} 
\label{sec:proof_of_theorem_}

\autoref{T2} can be obtained very much in the same way as 
\autoref{T1a} from \autoref{T1}, noting that the statement of 
\autoref{T1a} is also valid in the real-analytic category. However, we 
are grateful to one of the anonymous referees, to point out another, 
conceptually very nice proof
using the methods of the paper by Mir \cite{M1}, and using 
our normalization given in \autoref{rknormalization}. We  sketch  
that proof here, referring the reader for the 
details of the steps outlined in the cases below to \cite{M1}. 

From \autoref{rknormalization}, we have 
\begin{equation}\label{eqn1}
\rho(F(Z), \overline{F(Z)})=-\mathrm{Im}F_{n+1}(Z) +\sum_{j=1}^{n-1}|F_{j}(Z)|^2 + \phi(F(Z), \overline{F(Z)}),  \quad \phi=O(|F(Z)|^3). 
\end{equation} 
Applying a basis of CR vector fields $L_1 , \dots L_{n-1}$ of $M$ 
to that equation, and using the 
implicit function theorem in the resulting system of
 $n$ equations, we 
obtain a holomorphic map $\Psi$, valued in $\C^n$,  defined in a neighbourhood
of $(0, L \bar F (0), 0) \in \C^{n+1} \times \C^{ (n+1)\times (n-1)} \times \C$,
such that with $\tilde F = (F_1, \dots ,F_{n-1}, F_{n+1})$ the following
holds on $M$:
\begin{equation}
 	\label{e:eqn2} \tilde F = \Psi ( \bar F , L \bar F, F_n) = \left( \Psi_1 ( \bar F , L \bar F, F_n), \dots ,\Psi_n ( \bar F , L \bar F, F_n) \right) .  
 \end{equation} 

We now distinguish the following two cases: 

{\bf Case 1.} There exists a CR vector field $X$ on $M$ and a ${j_0}$ with 
$1 \leq {j_0} \leq n$ such 
that 
$X \Psi_{j_0} (\bar F , L \bar F, T) \not\equiv 0$ in a neighbourhood of $(0,0) \in M \times \C_T$. In 
that case, one can apply the implicit function theorem to the system of  equations formed by the  equation $0 = X \Psi_{j_0} (\bar F , L \bar F, F_n)$
and the equations \eqref{e:eqn2}, at a generic point in $M$, to see that
$F$ is real-analytic there; hence we see that in Case 1, $F$ is 
real-analytic on a dense, open subset of $M$. 

{\bf Case 2.} For all CR vector fields $X$ on $M$ and all $j$, $\leq j \leq n$, we have $X \Psi_j (\bar F , L \bar F, T) \equiv 0$. In that case, the reflection principle implies that $\Psi (\bar F (Z) , L \bar F (Z), T) =: \Phi (Z, T)$ is holomorphic in a neighbourhood 
of $0\in\C^n \times \C$.  We write $\tilde \Phi (Z, T) = (\Phi_{1} (Z,T) ,\dots , \Phi_{N-1} (Z, T), T, \Phi_N(Z,T))$ and distinguish the following two alternatives: 

{\bf Case 2A.} $\varrho (\tilde \Phi (Z, T) , \overline{\tilde \Phi (Z,T)}) \not\equiv 0$ for $Z\in M$ and $T\in \C$: In that case, as in Case 1, it follows 
that actually $F$ is real-analytic in a dense open subset of $M$.

{\bf Case 2B.} $\varrho (\tilde \Phi (Z, T) , \overline{\tilde \Phi (Z,T)}) \equiv 0$ for $Z\in M$ and $T\in \C$: By our normalization of $F$, we 
see that $\tilde \Phi $ is a {\em biholomorphism} from $M\times \C$ into $M'$. 
This is impossible if we assume $M'$ to be holomorphically nondegenerate. 
Hence, Case 2B does not happen, and in any case, $F$ is real-analytic on a dense 
open subset of $M$.





\end{document}